\theoremstyle{plain}
\newtheorem{thm}{Theorem}[section]
\newtheorem{cor}{Corollary}[section]
\newtheorem{lem}{Lemma}[section]
\newtheorem{prop}{Proposition}[section]
\theoremstyle{definition}
\newtheorem{defn}{Definition}[section]
\theoremstyle{remark}
\newtheorem{rem}{Remark}[section]
\title{Scaling asymptotics for\\ quantized Hamiltonian flows}
\author{Roberto Paoletti\footnote{\noindent{\bf Address:}
Dipartimento di Matematica e Applicazioni, Universit\`a degli Studi
di Milano Bicocca, Via R. Cozzi 53, 20125 Milano,
Italy; {\bf e-mail}: roberto.paoletti@unimib.it }}
\date{}
\begin{document}

\maketitle
\begin{abstract}
In recent years, the near diagonal asymptotics of the equivariant components of the
Szeg\"{o} kernel of a positive line bundle on a compact symplectic manifold have been
studied extensively by many authors. As a natural generalization of this
theme, here we consider the local scaling asymptotics of the quantization of a Hamiltonian
symplectomorphism, and specifically how they concentrate on the graph of the underlying classical map.
\end{abstract}

\section{Introduction}

Suppose that $M$ is a connected d-dimensional
complex projective manifold, and let $(B,h)$ be a positive Hermitian line bundle
on $M$. Thus $B$ is ample as an holomorphic line bundle, and $h$ is an Hermitian metric
on $B$, such that the unique
compatible connection $\nabla$ on $(B,h)$ has
curvature $-2i\,\omega$, where $\omega$ is a K\"{a}hler form.
If $B^\vee$ is the dual line bundle, let $B^\vee\supseteq X\stackrel{\pi}{\rightarrow}M$ be the unit circle bundle;
the connection 1-form $\alpha$ is a contact form on $X$.

These choices determine natural volume forms $dV_M=:(1/\mathrm{d}!)\,\omega^{\wedge \mathrm{d}}$
on $M$ and $d\mu_X=:(1/2\pi)\,\alpha\,\wedge \pi^*(dV_M)$ on $X$, respectively, hence induce
Hermitian structures on the vector spaces $H^0\left(M,B^{\otimes k}\right)$
of global holomorphic sections of the tensor powers $B^{\otimes k}$, for $k=0,1,2,\ldots$.
The Hardy space
$H(X)\subseteq L^2(X)$ is unitarily isomorphic in a natural manner to the Hilbert space
direct sum of the $H^0\left(M,B^{\otimes k}\right)$'s, and
$H^0\left(M,B^{\otimes k}\right)$ corresponds to the $k$-th isotype $H(X)_k\subseteq H(X)$
for the circle action on $X$ \cite{sz}.

In geometric quantization, the symplectic manifold $(M,2\omega)$ is viewed as a \lq classical
phase space\rq, and the Hilbert spaces $H^0\left(M,B^{\otimes k}\right)$ as corresponding \lq quantum spaces\rq\,
at Planck's constant $\hbar =1/k$; the semiclassical regime corresponds to letting $k\rightarrow +\infty$.
A basic theme in this setting is the quantization of Hamiltonian functions and their Hamiltonian flows
(see, e.g., \cite{ber}, \cite{bg}, \cite{z-index}).

Consider a classical observable on $M$, given by a $\mathcal{C}^\infty$ function $f:M\rightarrow \mathbb{R}$,
with Hamiltonian vector field $\upsilon_f$, and corresponding
flow $\phi^M_\tau:M\rightarrow M$ ($\tau\in \mathbb{R}$); thus $\phi^M:\tau\mapsto \phi^M_\tau$ is
a 1-parameter group of Hamiltonian symplectomorphisms. One regards the self-adjoint
Toeplitz operator associated to
$f$, $T_f:H(X)\rightarrow H(X)$, as the quantization of $f$;
explicitly, $T_f=:\Pi\circ M_f\circ \Pi$, where $\Pi:L^2(X)\rightarrow H(X)$ is the orthogonal projection
(the so-called Szeg\"{o}
projector),
and $M_f$ is multiplication
by $f$ (pulled-back to $X$). Being $S^1$-invariant,
$T_f$ restricts to \lq quantum observables\rq\,
 $T_f^{(k)}:H(X)_k\rightarrow H(X)_k$. On the other hand, the quantization of $\phi^M_\tau$ should be a
family of $S^1$-invariant unitary operators $\Phi_\tau:H(X)\rightarrow H(X)$, asymptotically related to
the dynamics of $\phi^M_\tau$.

There exists a contact vector field $\widetilde{\upsilon}_f$ on $X$ lifting $\upsilon_f$ \cite{wein},
which depends on $f$ (and not just on $\upsilon_f$).
Consequently,
$\phi^M$ lifts to a 1-parameter group $\phi^X:\tau\mapsto \phi^X_\tau$ of contactomorphisms of $(X,\alpha)$;
pull-back determines a unitary action $\left(\phi^X_{-\tau}\right)^*:L^2(X)\rightarrow L^2(X)$.

When $\phi^M_\tau$
is holomorphic, $\left(\phi^X_{-\tau}\right)^*$ preserves $H(X)$, and
the restriction is a quantization
of $\phi^M$. Thus one sets $\Phi_\tau=:\Pi\circ \left(\phi^X_{-\tau}\right)^*\circ \Pi:H(X)\rightarrow H(X)$
in this case.

However, in general $\left(\phi^X_{-\tau}\right)^*\big(H(X)\big)\varsubsetneq
H(X)$, and $\Pi\circ \left(\phi^X_{-\tau}\right)^*\circ \Pi:H(X)\rightarrow H(X)$ is
not unitary. Nonetheless, in the setting of Fourier-Hermite distributions
 \cite{bg}, Zelditch proved that there exists a canonical family of invariant zeroth order Toeplitz operators
 $R_\tau:H(X)\rightarrow H(X)$, such that $\Phi_\tau=:R_\tau\circ \Pi\circ \left(\phi^X_{-\tau}\right)^*\circ \Pi$
 is indeed a unitary automorphisms of $H(X)$ (essentially), and computed the leading symbol of $R_\tau$
 by the symbolic calculus of symplectic spinors \cite{z-index}.

Here we approach similar issues by the local scaling asymptotics of the distributional
kernels of operators
of the same general form as $\Phi_\tau$. Much attention has been drawn in recent years by
the near-diagonal scaling asymptotics of the equivariant components of Szeg\"{o} kernels,
involving various authors and points of view; this paper is specifically related to
the approach in \cite{bsz}, \cite{sz}, based on the microlocal theory of \cite{bs}
(see for instance \cite{mm1} and \cite{mm2} for a different perspective). This line
of research originated from the so-called TYZ expansion, which first appeared in \cite{t}, \cite{c}, \cite{z}.

We shall first build on \cite{bsz} and \cite{sz}
to determine the equivariant scaling asymptotics of
\begin{equation}
\label{eqn:defn-U-tau}
U_\tau=:R_\tau\circ \Pi\circ \left(\phi^X_{-\tau}\right)^*\circ \Pi,
\end{equation}
now over the graph of $\phi^M_{-\tau}$ (Theorems \ref{thm:rapid-decay} and
\ref{thm:concentration-rate}); here $R_\tau$ is a general $\mathcal{C}^\infty$ family of
invariant zeroth order Toeplitz operators on $X$.
Then we shall determine the leading symbol of $R_\tau$ if $U_\tau$ is unitary (Corollary \ref{cor:condition-on-symbol});
in the reverse direction, we shall derive a version of the Zelditch unitarization Lemma
in \cite{z-index} (Corollary \ref{cor:esiste-unitario}).
To leading order, $U_\tau$ solves a Schr\"{o}dinger type equation
(Proposition \ref{prop:1-parameter-group}).

By definition, $R_\tau=\Pi\circ Q_\tau\circ \Pi$,
where $Q_\tau$ is a zeroth order invariant pseudodifferential operator of classical type on $X$; the symbol
$\varrho_\tau$ of $R_\tau$ is the restriction of the symbol of $Q_\tau$ to the closed symplectic cone sprayed by
the connection form,
$$
\Sigma=:\big\{\big(x,r\alpha_x):\,x\in X,\,r>0\big\}\subseteq T^*X\setminus\{0\}.
$$
Being homogeneous of degree zero and $S^1$-invariant, $\varrho_\tau$ is really a smooth function on $M$.

Define $U_\tau$ by (\ref{eqn:defn-U-tau}). Identifying densities, half-densities and functions by the previous choices,
also denote by $U_\tau\in \mathcal{D}'(X\times X)$ the Schwartz kernel of $U_\tau$. By invariance, $U_\tau$ restricts to
operators $U_{\tau,k}:H(X)_k\rightarrow H(X)_k$. If $k=0,1,2,\ldots$ and
$\left\{s_{kj}\right\}_{j=1}^{N_k}$ is an orthonormal basis of $H(X)_k$,
the corresponding distributional kernels are $U_{\tau,k}=\sum_{j=1}^{N_k}U_{\tau}\left(s_{kj}\right)\boxtimes \overline{s_{kj}}
\in \mathcal{C}^\infty(X\times X)$,
and $U_{\tau}(x,y)=\sum_{k\ge 0}U_{\tau,k}(x,y)$. More explicitly,
$$
U_{\tau,k}(x,y)=\sum_{j=1}^{N_k}U_{\tau}\left(s_{kj}\right)(x)\cdot \overline{s_{kj}(y)}
.$$

As in the case of the Szeg\"{o} kernel, the following scaling asymptotics for $U_{\tau,k}$ are expressed in terms
of Heisenberg local coordinates on $X$; these are precisely defined
in \cite{sz}. A system of Heisenber local coordinates centered at some $x\in X$
is built of a system of
preferred local coordinates on $M$, centered at $m=\pi(x)$ (meaning that the symplectic and complex structure are
the standard ones at the origin), and a preferred local section $e_L$ of $L$ at $m=\pi(x)$ (this is a prescription on the second order jet
of $e_L$ at $m$). In particular, a system of Heisenberg local coordinates centered at $x$
induces unitary isomorphism $T_mM\cong \mathbb{C}^\mathrm{d}$ and $T_xX\cong \mathbb{R}\times \mathbb{C}^\mathrm{d}$;
in the latter, $\mathbb{R}\times \{\mathbf{0}\}$ and $\{0\}\times \mathbb{C}^\mathrm{d}$ correspond
to the vertical and horizontal tangent spaces, respectively.

In Heisenberg local coordinates, the equivariant scaling asymptotics of
Szeg\"{o} kernels exhibit their universal nature. If $\gamma$ is a system of Heisenberg local
coordinates centered at $x$, following \cite{sz} we shall let
$x+(\theta,\mathbf{v})=:\gamma(\theta,\mathbf{v})$ if $\theta\in (-\pi,\pi)$, and $\mathbf{v}\in \mathbb{R}^{2\mathrm{d}}$
is sufficiently small; in the same range, we shall also write
$x+\mathbf{v}=:x+(0,\mathbf{v})$. If $\vartheta\in (-\pi,\pi)$ the action $r_\vartheta:X\rightarrow X$ of $e^{i\vartheta}\in S^1$
is expressed by a translation by $\vartheta$ in the angular coordinate, that is, $r_\vartheta\big(x+(\theta,\mathbf{v})\big)=x+(\vartheta+\theta,\mathbf{v})$ wherever defined; furthermore,
 $m+\mathbf{v}=:\pi\big(x+(\theta,\mathbf{v})\big)$ is the underlying system of preferred
local coordinates at $m$.
Given the built-in unitary isomorphism $T_mM\cong \mathbb{C}^\mathrm{d}$,
we shall also use the expressions $x+(\theta,\mathbf{v})$ and $x+\mathbf{v}$ for $\mathbf{v}\in T_mM$ of suitably small
norm.

Fix $x\in X$ and $\tau\in \mathbb{R}$, and set $x_\tau=:\phi^X_{-\tau}(x)$; if $m=:\pi(x)$
and $m_\tau=:\pi(x_\tau)$, then $m_\tau=\phi^M_{-\tau}(m)$.
Choose Heisenberg local coordinates centered
at $x$ and $x_\tau$ respectively; then the differential
$d_m\phi^M_{-\tau}:T_mM\rightarrow T_{m_\tau}M$ corresponds to a $2\mathrm{d}\times 2\mathrm{d}$
symplectic matrix $A_{\tau,m}$. A change in Heisenberg local
coordinates at $x$ and $x_\tau$ will turn $A_{\tau,m}$ into
$A_{\tau,m}'=:R\,A_{\tau,m}\,S^t$, where $R$ and $S$ are unitary (that is,
symplectic and orthogonal).

\begin{defn}
For $\tau\in \mathbb{R}$, the \textit{saturated graph} of $\phi^X_{-\tau}$ is
$$
\widetilde{\mathrm{graph}}\left(\phi^X_{-\tau}\right)
=:(\pi\times \pi)^{-1}\left(\mathrm{graph}\left(\phi^M_{-\tau}\right)\right)\subseteq X\times X.
$$
\end{defn}

Thus $ \widetilde{\mathrm{graph}}\left(\phi^X_{-\tau}\right)$ is the saturation of
$\mathrm{graph}\left(\phi^X_{-\tau}\right)$ under the $S^1$-action.
In other words, $(x,y)\in \widetilde{\mathrm{graph}}\left(\phi^X_{-\tau}\right)$ if and only if
$y=r_\vartheta(x_\tau)$ for some $e^{i\vartheta}\in S^1$.

As $k\rightarrow +\infty$, the kernel $U_{\tau,k}$ concentrates on $ \widetilde{\mathrm{graph}}\left(\phi^X_{-\tau}\right)$,
meaning that $U_{\tau,k}(x,y)=O\left(k^{-\infty}\right)$
uniformly in $(x,y)\in X\times X\setminus \widetilde{\mathrm{graph}}\left(\phi^X_{-\tau}\right)$.
More precisely, if $\mathrm{dist}_X$ is the Riemannian distance on $X$ then we have:

\begin{thm}
\label{thm:rapid-decay}
For any $D,\varepsilon>0$, uniformly in $(x,y)\in X\times X$ satisfying
$\mathrm{dist}_X\left(y,S^1\cdot x_\tau\right)\ge D\,k^{\varepsilon-\frac 12}$, we have
$U_{\tau,k}(x,y)=O\left(k^{-\infty}\right)$ as $k\rightarrow +\infty$.
\end{thm}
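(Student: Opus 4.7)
I propose a three-step approach, expressing $U_{\tau,k}(x,y)$ as a scalar oscillatory integral with large parameter $k$ and then applying complex stationary phase of the Melin--Sj\"ostrand type.

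\textbf{Step 1 (Oscillatory integral expression).} Since $U_\tau$ is $S^1$-invariant, its $k$-th isotypical component is
\[
U_{\tau,k}(x,y) \;=\; \frac{1}{2\pi}\int_{-\pi}^{\pi} e^{-ik\vartheta}\, U_\tau\bigl(r_\vartheta(x),\,y\bigr)\, d\vartheta.
\]
Writing $U_\tau = \Pi\circ Q_\tau\circ \Pi\circ (\phi^X_{-\tau})^*\circ \Pi$ and substituting the Boutet de Monvel--Sj\"ostrand parametrix $\Pi(w,w')\sim \int_0^\infty e^{it\psi(w,w')}\,s(w,w',t)\,dt$ for each occurrence of $\Pi$, then rescaling the H\"ormander variables $t_j = k\sigma_j$ (after standard microlocal cut-offs that localize $\sigma_j \in [\delta, \delta^{-1}]$), one presents, modulo an $O(k^{-\infty})$ remainder, $U_{\tau,k}(x,y)$ in the form
\[
U_{\tau,k}(x,y)\sim k^{N}\!\int e^{ik\Psi_\tau(x,y;\,\vartheta,\mathbf{z},\boldsymbol{\sigma})}\, A(x,y;\,\vartheta,\mathbf{z},\boldsymbol{\sigma};\,k)\, d\vartheta\, d\mathbf{z}\, d\boldsymbol{\sigma},
\]
where $\Psi_\tau$ combines $-\vartheta$ with $\sigma_j$-weighted copies of the phase $\psi$ (with the middle factor evaluated along $(\phi^X_{-\tau})^*$), the symbol $\varrho_\tau$ enters through $Q_\tau$, and $A$ admits an asymptotic expansion in descending powers of $k$.

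\textbf{Step 2 (Critical locus and lower bound on $\Im \Psi_\tau$).} Because $\Im\psi\ge 0$ with equality if and only if the arguments coincide, the condition $\Im\Psi_\tau = 0$ forces the intermediate base points $\mathbf{z}$ to collapse onto the diagonal, while the $\vartheta$-stationarity and the intertwining by $(\phi^X_{-\tau})^*$ yield $y = \phi^X_{-\tau}(r_\vartheta(x)) = r_\vartheta(x_\tau)$ (using that the contact lift $\widetilde{\upsilon}_f$ is $S^1$-invariant, so $\phi^X_{-\tau}$ commutes with every $r_\vartheta$). Hence the real critical set is exactly $\widetilde{\mathrm{graph}}(\phi^X_{-\tau})$. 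Performing partial complex stationary phase in $(\mathbf{z},\boldsymbol{\sigma})$ and expanding $\psi$ in Heisenberg local coordinates as in \cite{bsz},~\cite{sz}, one obtains the quadratic lower bound
\[
\Im\Psi_\tau\ \ge\ c\,\mathrm{dist}_X\bigl(y,\,S^1\cdot x_\tau\bigr)^2,
\]
uniform on compacta, for some $c>0$.

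\textbf{Step 3 (Damping and conclusion).} When $\mathrm{dist}_X(y,\,S^1\cdot x_\tau)\ge D\,k^{\varepsilon-1/2}$, the damping factor satisfies $\bigl|e^{ik\Psi_\tau}\bigr|\le \exp\bigl(-c\,k\cdot D^2 k^{2\varepsilon-1}\bigr)=\exp\bigl(-cD^2 k^{2\varepsilon}\bigr)$, which overrides any polynomial in $k$. Combined with the boundedness of the amplitude and a partition of unity in $(x,y,\vartheta)$, the compactness of $X\times X$ then yields $U_{\tau,k}(x,y)=O(k^{-\infty})$ uniformly in the stated region.

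\textbf{Main obstacle.} The principal technical difficulty lies in the honest composition of three Boutet de Monvel--Sj\"ostrand parametrices intertwined by the pull-back $(\phi^X_{-\tau})^*$ and by the pseudodifferential factor $Q_\tau$: the resulting multi-parameter phase $\Psi_\tau$ must be reduced by partial stationary phase in the $\sigma_j$'s and in the auxiliary points $z_j$ before the sharp quadratic lower bound on $\Im\Psi_\tau$ in terms of $\mathrm{dist}_X\bigl(y,\,S^1\cdot x_\tau\bigr)$ emerges; the Heisenberg-coordinate normal forms for $\psi$ developed in \cite{sz} are the crucial tool to make the reduction uniform in $(x,y)$.
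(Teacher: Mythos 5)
Your proposal reaches the right conclusion by a genuinely different route from the paper, and the approach would work, but one point in Step~2 needs to be reorganized.

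The paper does not use the exponential damping $\bigl|e^{ik\Psi}\bigr| = e^{-k\,\Im\Psi}$ directly. After writing $U_\tau = R_\tau\circ\Pi_\tau$ with $\Pi_\tau = (\phi^X_{-\tau})^*\circ\Pi$ (a single intermediate point $z$ and two Boutet de Monvel--Sj\"ostrand phases, rather than your three $\Pi$'s and two intermediate points), it splits the $z$-integration into a shrinking $k$-dependent ball of radius $\sim k^{\varepsilon-1/2}$ around the fiber over $x$ and its complement, and in each regime does repeated integration by parts in one Hörmander variable: on the complement it uses $|\partial_t\Psi_1| = |\psi(x,z)|\ge \Im\psi(x,z)\gtrsim k^{2\varepsilon-1}$, on the ball it uses $|\partial_u\Psi_1| = |\psi(r_\vartheta(z_\tau),y)|\gtrsim k^{2\varepsilon-1}$. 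Your approach replaces this by a single pointwise lower bound $\Im\Psi_\tau\gtrsim \mathrm{dist}_X\bigl(y,S^1\cdot x_\tau\bigr)^2$ and reads off super-polynomial decay from the modulus of $e^{ik\Psi_\tau}$. Both exploit the same input, namely the lower bound $\Im\psi\gtrsim\mathrm{dist}^2$ from \cite{bs}, but the paper's route avoids having to establish a uniform imaginary-part estimate over the whole fibered integration domain and never needs a stationary-phase reduction; your route is slightly more conceptual and yields the bound in one stroke. The factorization $U_\tau = R_\tau\circ\Pi_\tau$ is also cleaner than inserting three parametrices into $\Pi\circ Q_\tau\circ\Pi\circ(\phi^X_{-\tau})^*\circ\Pi$.

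One correction to your Step~2: the invocation of ``partial complex stationary phase in $(\mathbf{z},\boldsymbol{\sigma})$'' before stating the lower bound on $\Im\Psi_\tau$ is backwards and unnecessary. The bound $\Im\Psi_\tau\gtrsim\mathrm{dist}_X\bigl(y,S^1\cdot x_\tau\bigr)^2$ already holds pointwise in all integration variables, simply because the $\sigma_j$ are localized away from $0$ and $\infty$, and the triangle inequality forces at least one of the distances feeding into $\Im\psi$ to be $\gtrsim\mathrm{dist}_X\bigl(y,S^1\cdot x_\tau\bigr)$; no reduction to a critical value is needed, and performing Melin--Sj\"ostrand stationary phase in the regime where the critical point drifts at scale $k^{\varepsilon-1/2}$ from the real locus would actually demand the very uniformity one is trying to prove. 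State the pointwise bound directly and then integrate.
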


Let us analyze the rate at which $U_{\tau,k}$ concentrates on the saturated graph.
For any $e^{i\vartheta_1},\,e^{i\vartheta_2}\in S^1$ and $(x,y)\in X\times X$, we have
$U_{\tau,k}\big(r_{\vartheta_1}(x),r_{\vartheta_2}(y)\big)=
e^{ik(\vartheta_1-\vartheta_2)}\,U_{\tau,k}(x,y)$, so that without loss we may work in the neighborhood
of a given $(x,x_\tau)\in \mathrm{graph}\left(\phi^X_{-\tau}\right)$. Thus we may consider the
behavior of $U_{\tau,k}$ at points of the form $\big(x+(\vartheta_1,\mathbf{u}),x_\tau+(\vartheta_2,\mathbf{w})\big)$,
computed in systems of Heisenberg local coordinates centered at $x$ and $x_\tau$, respectively. Now
\begin{eqnarray}\label{eqn:equivarianza-U-k}
\lefteqn{U_{\tau,k}\big(x+(\vartheta_1,\mathbf{u}),x_\tau+(\vartheta_2,\mathbf{w})\big)}\\
&=&
U_{\tau,k}\big(r_{\vartheta_1}(x+\mathbf{u}),r_{\vartheta_2}(x_\tau+\mathbf{w})\big)=
e^{ik(\vartheta_1-\vartheta_2)}\,U_{\tau,k}\big(x+\mathbf{u},x_\tau+\mathbf{w}\big),\nonumber
\end{eqnarray}
so we need only consider pairs $\big(x+\mathbf{u},x_\tau+\mathbf{w}\big)$ converging to $(x,x_\tau)$.

In order to formulate our result, we need to define a certain quadratic function
$\mathcal{S}_A:\mathbb{R}^{2\mathrm{d}}\times \mathbb{R}^{2\mathrm{d}}\rightarrow \mathbb{C}$ associated to a symplectic matrix $A$.
Let
$$
J_0=:\left(
       \begin{array}{cc}
         0 & -I_d \\
         I_d & 0 \\
       \end{array}
     \right);
$$
thus $J_0$ represents the standard complex structure on $\mathbb{R}^{2\mathrm{d}}$, and
$-J_0$ the standard symplectic structure $\omega_0$.

\begin{defn}
\label{defn:key-bilinear-form}
Let $A$ be a symplectic $2\mathrm{d}\times 2\mathrm{d}$ matrix, and let
$A=O\,P$ be its polar decomposition; thus $O$ is orthogonal and symplectic,
hence unitary, and $P$ is symmetric positive definite and symplectic.
Then the following matrices are symmetric:
$$
Q_A=I+P^2,\,\,\,\mathcal{P}_A=:O\,Q_A^{-1}\,O^t,\,\,\,
\mathcal{R}_A=:O\,\left(I-P^2\right)\,Q_A^{-1}\,J_0\,O^t.
$$
For $\mathbf{u},\mathbf{w}\in  \mathbb{R}^{2\mathrm{d}}$, let
$L(\mathbf{u},\mathbf{w})=:A\mathbf{u}-\mathbf{w}$. Define $\mathcal{S}_A:\mathbb{R}^{2\mathrm{d}}\times \mathbb{R}^{2\mathrm{d}}
\rightarrow \mathbb{C}$ by setting
$$
\mathcal{S}_A(\mathbf{u},\mathbf{w})=:-L(\mathbf{u},\mathbf{w})^t\,\left[\mathcal{P}_A+\frac i2\,\mathcal{R}_A\right]\,
L(\mathbf{u},\mathbf{w})-i\,\omega_0(A\mathbf{u},\mathbf{w}).
$$
\end{defn}

For example, when $A=O$ is unitary (that is, $P=I$) we have $\mathcal{P}_A=\frac 12\,I_d$, and
$$
\mathcal{S}_A(\mathbf{u},\mathbf{w})=:-\frac 12\,\|A\mathbf{u}-\mathbf{w}\|^2-i\,\omega_0(A\mathbf{u},\mathbf{w})=
\psi_2(A\mathbf{u},\mathbf{w}),
$$
where $\psi_2$ is the universal exponent in the equivariant Szeg\"{o} kernel asymptotics \cite{sz}.

If $R$ and $S$ are unitary matrices, we have
$\mathcal{S}_{RAS^t}(S\mathbf{u},R\mathbf{w})= S_A(\mathbf{u},\mathbf{w})$. Thus if $m_\tau=:\phi^M_{-\tau}(m)$,
and $A=A_{\tau,m}$
represents $d_m\phi^M_{-\tau}:T_mM\rightarrow T_{m_\tau}M$,
then $\mathcal{S}_{A}$ does not depend on the choice of Heisenberg
local coordinates, and is
well-defined as a function
$$
\mathcal{S}_{\tau,m}:T_mM\times T_{m_\tau}M
\longrightarrow \mathbb{C}.
$$

Similarly, $\nu:\mathbb{R}\times M\rightarrow \mathbb{R}$ given by
\begin{equation}
\label{eqn:defn-nu}
\nu(\tau,m)=:\sqrt{\det \left(Q_{A_{\tau,m}}\right)}
\end{equation}
is well-defined. If $d_m\phi^M_\tau$ is unitary, $\nu(\tau,m)=2^\mathrm{d}$. Notice that, with
$A=A_{\tau,m}$,
\begin{equation}
\label{eqn:nu-compare}
\nu(\tau,m)=\det \left(I+A^t\,A\right)^{1/2}=\det\big(A\,J_0+J_0\,A\big)^{1/2}.
\end{equation}

As a further piece of notation, let $T_{\tau,m}\subseteq T_mM\times T_{m_\tau}M$ be the tangent space at
$\left(m,m_\tau\right)$ to $\mathrm{graph}\left(\phi^M_{-\tau}\right)$. In Heisenberg local coordinates,
this is
$$
T_{\tau,m}=\mathrm{graph}(A)=:\left\{(\mathbf{u},\mathbf{w})\in \mathbb{C}^\mathrm{d}\times \mathbb{C}^\mathrm{d}:
\,A\mathbf{u}=\mathbf{w}\right\}.
$$
Finally, let $N_{\tau,m}=:T_{\tau,m}^\perp\subseteq T_mM\times T_{m_\tau}M$ be its orthocomplement for the
Riemannian metric on $M\times M$.

\begin{thm}
\label{thm:concentration-rate}
Let $R_\tau$ be invariant zeroth order Toeplitz operators on $X$, with symbol
$\varrho_\tau\in \mathcal{C}^\infty(M)$, and define
$U_\tau$ by (\ref{eqn:defn-U-tau}).
Suppose $x\in X$, $x_\tau=:\phi^X_{-\tau}(x)$, $m=:\pi(x)$. Fix Heisenberg local coordinates centered at $x$ and $x_\tau$, respectively. Let $E>0$ be a constant.
Then, uniformly in $\mathbf{u}\in T_mM$ and $\mathbf{w}\in T_{m_\tau}M$ with $\|\mathbf{u}\|,\,\|\mathbf{w}\|\le
E\, k^{1/9}$ and $(\mathbf{u},\mathbf{w})\in N_{\tau,m}$, as $k\rightarrow +\infty$ we have
\begin{eqnarray*}
\lefteqn{U_{\tau,k}\left(x+\frac{\mathbf{u}}{\sqrt{k}},x_\tau+\frac{\mathbf{w}}{\sqrt{k}}\right)}\\
&\sim&\varrho_\tau(m)\,\left(\frac k\pi\right)^\mathrm{d}\,\frac{2^{\mathrm{d}}}{\nu(\tau,m)}\,
\cdot e^{\mathcal{S}_{\tau,m}(\mathbf{u},\mathbf{w})}\cdot \left(1+\sum_{j\ge 1}k^{-j/2}\,a_j(m,\tau,\mathbf{u},\mathbf{w})\right),
\end{eqnarray*}
where $a_j(m,\tau,\mathbf{u},\mathbf{w})$
is a polynomial in $\mathbf{u}$ and $\mathbf{w}$, depending smoothly on $m,\tau$.

In addition, the polynomial $a_j(m,\tau,X,Y)$ has the same parity as $j$.
\end{thm}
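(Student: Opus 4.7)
The proof I envision combines the Boutet de Monvel--Sj\"{o}strand (BS) microlocal parametrix for $\Pi$ with a scaling and stationary-phase analysis in the spirit of \cite{bsz}, \cite{sz}, now twisted by the dynamics $\phi^X_{-\tau}$. As a first step, I would insert $\Pi(x,y)\sim\int_0^\infty e^{it\psi(x,y)}s(t,x,y)\,dt$ into both Szeg\"{o} projectors appearing in $U_\tau=R_\tau\circ\Pi\circ(\phi^X_{-\tau})^*\circ\Pi$ and expand $R_\tau=\Pi\circ Q_\tau\circ\Pi$ with $Q_\tau$ a classical pseudodifferential operator of order zero. This realizes $U_\tau(x,y)$ as a multi-frequency oscillatory integral. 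I would then pass to $U_{\tau,k}$ by the $S^1$-Fourier projection
$$U_{\tau,k}(x,y)=\frac{1}{2\pi}\int_{-\pi}^\pi e^{-ik\vartheta}\,U_\tau(x,r_\vartheta y)\,d\vartheta,$$
and rescale the frequency variables $t_j\mapsto k t_j$, so that $k$ plays the role of a universal large parameter.

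Next, I would set $x\mapsto x+\mathbf u/\sqrt k$, $x_\tau\mapsto x_\tau+\mathbf w/\sqrt k$ and apply complex stationary phase in the auxiliary variables $(\vartheta,t_j,z_j\in X)$, with $(\mathbf u,\mathbf w)$ treated as parameters. By Theorem \ref{thm:rapid-decay}, contributions from outside a shrinking neighborhood of $\widetilde{\mathrm{graph}}(\phi^X_{-\tau})$ are $O(k^{-\infty})$, so I may localize near the critical set, which corresponds geometrically to the condition $y\in S^1\cdot\phi^X_{-\tau}(x)$ with a prescribed contact phase. Using the second-order Taylor expansion of $\psi$ in Heisenberg coordinates recorded in \cite{sz}, the stationary phase lemma yields an asymptotic expansion in $k^{-1/2}$ whose leading factor contains $\varrho_\tau(m)$ (the symbol of $R_\tau$), a Hessian determinant to be identified with $2^\mathrm{d}/\nu(\tau,m)$, and a Gaussian exponent in $(\mathbf u,\mathbf w)$ to be identified with $\mathcal S_{\tau,m}(\mathbf u,\mathbf w)$.

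The technical heart of the proof is this identification. Because the two BS contributions flank the pull-back $(\phi^X_{-\tau})^*$, the block Hessian at the critical point decouples into two pieces glued by the symplectic matrix $A=A_{\tau,m}$. Integrating out the intermediate variables produces a quadratic form schematically of the shape $-L^tML+(\text{symplectic term})$, with $L=A\mathbf u-\mathbf w$. Applying the polar decomposition $A=OP$ to invert $M$, the matrices $Q_A=I+P^2$ and then $\mathcal P_A$, $\mathcal R_A$ emerge as the real and imaginary components of the inverse, while the identity $(\ref{eqn:nu-compare})$ matches the Hessian prefactor with $\nu(\tau,m)^{-1}$. As a sanity check, the case $A=O$ unitary should collapse the answer to $\psi_2(A\mathbf u,\mathbf w)$ with prefactor $1$, recovering the equivariant Szeg\"{o} asymptotics of \cite{sz}. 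The restriction $(\mathbf u,\mathbf w)\in N_{\tau,m}$ guarantees $\operatorname{Re}\mathcal S_{\tau,m}<0$ off the origin, so the leading Gaussian genuinely decays.

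The radius bound $\|\mathbf u\|,\|\mathbf w\|\le Ek^{1/9}$ is forced by the control of the cubic Taylor remainder of the phase: in rescaled variables this remainder is of size $k^{-1/2}\,(k^{1/9})^3=k^{-1/6}$, which is small enough to be absorbed into the amplitude while preserving the asymptotic expansion. The polynomials $a_j$ and their parity then emerge from the standard stationary phase machinery: the coefficient of $k^{-j/2}$ is obtained by pairing Taylor terms of the phase whose combined degree in $(\mathbf u,\mathbf w)$ has parity $j$. The main obstacle I anticipate is the disciplined book-keeping of the block Hessian --- the intermediate variables must be integrated out in an order that reveals the polar structure of $A$ and displays $\mathcal P_A$, $\mathcal R_A$ as natural outputs of the Gaussian integration, rather than as artifacts of a particular choice of Heisenberg coordinates.
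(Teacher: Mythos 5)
Your overall strategy is sound and shares the key ingredients with the paper's proof (Boutet de Monvel--Sj\"ostrand parametrix, $S^1$-Fourier projection, rescaling by $\sqrt k$, and the polar decomposition $A=OP$ to exhibit $Q_A$, $\mathcal P_A$, $\mathcal R_A$), but it diverges in two organizational respects. First, you expand $R_\tau=\Pi\circ Q_\tau\circ\Pi$, which introduces an extra Szeg\"o factor and hence extra intermediate variables; the paper instead represents $R_\tau$ \emph{directly} as a Fourier integral operator, $R_\tau(y',y'')=\int_0^\infty e^{it\psi(y',y'')}a_\tau(t,y',y'')\,dt$, with the same phase $\psi$ as $\Pi$, so that only one intermediate point $z\in X$ and two frequencies $t,u$ appear. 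Second, you propose to run complex stationary phase simultaneously in all auxiliary variables $(\vartheta,t_j,z_j)$; the paper instead rescales the fiber variable $\mathbf v\mapsto\mathbf v/\sqrt k$, applies stationary phase \emph{only} in the four variables $(t,\theta,u,\vartheta)$ (a constant nondegenerate Hessian with $\det=1$), and then performs the $\mathbf v$-integration as an honest Gaussian integral via two explicit changes of variables. This separation is what makes the completing-the-square manipulations transparent and lets the Gaussian determinant produce $2^{\mathrm d}/\sqrt{\det Q}=2^{\mathrm d}/\nu(\tau,m)$ cleanly, while the parity claim follows because the only source of half-integer powers of $k$ is the Taylor expansion of the amplitude in the rescaled arguments $\mathbf u/\sqrt k$, $\mathbf w/\sqrt k$, $\mathbf v/\sqrt k$. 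Your route would give a larger block Hessian that must be reduced by hand to the same answer; it can be made to work, but the bookkeeping is heavier.

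One point in your sketch is imprecise and should be corrected. You invoke Theorem~\ref{thm:rapid-decay} to ``localize near the critical set.'' But in the proof of Theorem~\ref{thm:concentration-rate} the endpoints $(x+\mathbf u/\sqrt k,\ x_\tau+\mathbf w/\sqrt k)$ are \emph{already} near the graph by hypothesis; what has to be localized is the \emph{intermediate} integration variable $z$ near $x$ (and hence $z_\tau$ near $x_\tau$), at the $k^{\varepsilon-1/2}$ scale. This is not a consequence of Theorem~\ref{thm:rapid-decay}; rather, it follows from the lower bound $|\partial_t\Psi_1|\ge\Im\psi(x,z)\gtrsim\mathrm{dist}(x,z)^2$ (Corollary~1.3 of the Boutet de Monvel--Sj\"ostrand paper) combined with iterated integration by parts in $dt$ and, for the angular variables, the nondegeneracy of $\nabla_{\theta,\vartheta}\Psi_1$ away from $(t,u,\theta,\vartheta)=(1,1,0,0)$. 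Also, verify that the injectivity of $(\mathbf u,\mathbf w,\mathbf v)\mapsto(\mathbf u-\mathbf v,A\mathbf v-\mathbf w)$ on $N_{\tau,m}\times T_mM$, which gives the uniform negative real part of the Gaussian exponent, is stated and used; this is essential both to make the $\mathbf v$-Gaussian converge and to justify term-by-term integration of the stationary-phase expansion.
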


Explicitly, the last claim is that
$a_j(m,\tau,-X,-Y)=(-1)^j\,a_j(m,\tau,X,Y)$.

In particular, Theorem \ref{thm:concentration-rate} describes an exponential decay of the
rescaled kernel $U_{\tau,k}\left(x+\mathbf{u}/\sqrt{k},x_\tau+\mathbf{w}/\sqrt{k}\right)$
along normal directions to the graph; the same will hold under the general transversality assumption
$A\mathbf{u}\neq \mathbf{w}$.

\begin{cor}\label{cor:condition-on-symbol}
If $U_{\tau,k}$ is unitary for $k\gg 0$, then
\begin{equation}
\label{eqn:condition-on-symbol}
\big|\varrho_{\tau}(m)\big|=2^{-\mathrm{d}/2}\,\sqrt{\nu (\tau,m)}.
\end{equation}
\end{cor}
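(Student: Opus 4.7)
The plan is to exploit the operator identity $U_{\tau,k}\,U_{\tau,k}^{*}=\Pi_k$ implied by unitarity, where $\Pi_k:L^2(X)\to H(X)_k$ is the equivariant Szeg\"o projector. Evaluating both Schwartz kernels on the diagonal at $x\in X$ gives
$$
\int_X \bigl|U_{\tau,k}(x,y)\bigr|^2\,d\mu_X(y)=\Pi_k(x,x)\sim \left(\frac{k}{\pi}\right)^{\mathrm{d}}
$$
by the standard on-diagonal scaling asymptotic of the equivariant Szeg\"o kernel. I would therefore compute the left-hand side asymptotically via Theorems \ref{thm:rapid-decay} and \ref{thm:concentration-rate} and match leading orders to read off $|\varrho_\tau(m)|$, where $m=\pi(x)$.

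Set $x_\tau=\phi^X_{-\tau}(x)$ and fix Heisenberg local coordinates at $x_\tau$. Theorem \ref{thm:rapid-decay} together with the $S^1$-equivariance $|U_{\tau,k}(x,r_\vartheta y)|=|U_{\tau,k}(x,y)|$ reduces the integration to a $k^{\varepsilon-\frac{1}{2}}$-tube about $S^1\cdot x_\tau$ up to $O(k^{-\infty})$. Writing $y=x_\tau+(\theta,\mathbf{v})$, the integrand is independent of $\theta$, so the $\theta$-integration contributes a factor $2\pi$ that cancels the $(2\pi)^{-1}$ in $d\mu_X=(2\pi)^{-1}\alpha\wedge \pi^*(dV_M)$. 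After rescaling $\mathbf{v}=\mathbf{w}/\sqrt{k}$,
$$
\int_X |U_{\tau,k}(x,y)|^2\,d\mu_X(y)\sim k^{-\mathrm{d}}\int_{\mathbb{R}^{2\mathrm{d}}}\bigl|U_{\tau,k}\bigl(x,\,x_\tau+\mathbf{w}/\sqrt{k}\bigr)\bigr|^2\,d\mathbf{w}.
$$
I would then apply the scaling expansion of Theorem \ref{thm:concentration-rate} at $\mathbf{u}=0$: with $A=A_{\tau,m}$, transversality $A\cdot 0=0\neq\mathbf{w}$ holds for $\mathbf{w}\neq 0$, and one has $L(0,\mathbf{w})=-\mathbf{w}$, so $\mathcal{S}_A(0,\mathbf{w})=-\mathbf{w}^t\mathcal{P}_A\mathbf{w}-\tfrac{i}{2}\mathbf{w}^t\mathcal{R}_A\mathbf{w}$, whose real part $-\mathbf{w}^t\mathcal{P}_A\mathbf{w}$ is negative definite since $\mathcal{P}_A=O\,Q_A^{-1}O^t$ is positive definite.

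Performing the resulting Gaussian integral,
$$
\int_{\mathbb{R}^{2\mathrm{d}}} e^{-2\mathbf{w}^t\mathcal{P}_A\mathbf{w}}\,d\mathbf{w}=\frac{\pi^{\mathrm{d}}}{2^{\mathrm{d}}\sqrt{\det\mathcal{P}_A}}=\frac{\pi^{\mathrm{d}}\,\nu(\tau,m)}{2^{\mathrm{d}}},
$$
using $\det\mathcal{P}_A=(\det Q_A)^{-1}=\nu(\tau,m)^{-2}$. Combining all factors,
$$
\int_X |U_{\tau,k}(x,y)|^2\,d\mu_X(y)\sim \left(\frac{k}{\pi}\right)^{\mathrm{d}}\frac{2^{\mathrm{d}}\,|\varrho_\tau(m)|^2}{\nu(\tau,m)},
$$
and matching with $(k/\pi)^{\mathrm{d}}$ yields $|\varrho_\tau(m)|^2=\nu(\tau,m)/2^{\mathrm{d}}$, i.e.\ (\ref{eqn:condition-on-symbol}). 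The chief obstacle is that Theorem \ref{thm:concentration-rate} as stated is restricted to pairs $(\mathbf{u},\mathbf{w})\in N_{\tau,m}$, whereas $(0,\mathbf{w})$ lies in $N_{\tau,m}$ only when $\mathbf{w}=0$; one must appeal to the extension of the full asymptotic expansion to transversal pairs announced in the remark following the theorem, or alternatively convert the integral to a normal-slice integral via the tangent/normal decomposition of $T_{\tau,m}\oplus N_{\tau,m}$ at the expense of a Jacobian. Polynomial corrections, curvature distortions of $d\mu_X$, and the reconciliation of the rapid-decay cutoff $k^{\varepsilon-1/2}$ with the theorem's rescaled radius $E\,k^{1/9}$ all contribute $O(k^{-1/2})$ relative errors that do not affect the leading-order identification of $|\varrho_\tau(m)|$.
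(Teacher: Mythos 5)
Your proof is correct and follows essentially the same route as the paper: unitarity gives $U_{\tau,k}U_{\tau,k}^*=\Pi_k$, one evaluates the composed Schwartz kernel on the diagonal, localizes near $S^1\cdot x_\tau$ via Theorem~\ref{thm:rapid-decay} with $\varepsilon=1/9$, applies the rescaled expansion of Theorem~\ref{thm:concentration-rate} at $\mathbf{u}=\mathbf{0}$, and matches the resulting Gaussian integral $\int e^{-2\mathbf{w}^t\mathcal{P}_A\mathbf{w}}\,d\mathbf{w}=\pi^{\mathrm d}\,\nu(\tau,m)/2^{\mathrm d}$ against $\Pi_k(x,x)\sim(k/\pi)^{\mathrm d}$. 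You are right to flag that the pairs $(\mathbf{0},\mathbf{w})$ with $\mathbf{w}\neq\mathbf{0}$ do not lie in $N_{\tau,m}$; the paper silently invokes the transversal extension of Theorem~\ref{thm:concentration-rate} (noted in the remark after its statement) at exactly this point, so your caveat is a legitimate and correctly resolved concern rather than a gap in either argument.
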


The hypothesis in Corollary \ref{cor:condition-on-symbol} means that $U_\tau$
is unitary, as an endomorphism of $H(X)$, on the complement of a finite dimensional subspace;
it is obviously satisfied if $U_\tau$ itself is unitary.

We can give an analogue of the unitarization Lemma of \cite{z-index}:

\begin{cor}
\label{cor:esiste-unitario}
There exists a $\mathcal{C}^\infty$ family $R_\tau$ of zeroth order Toeplitz operators
$R_\tau$ such that if $U_\tau$ is defined by (\ref{eqn:defn-U-tau}), then
$$
U_{\tau,k}\circ U_{\tau,k}^*=\Pi_k+O\left(k^{-\infty}\right),\,\,\,\,\,\,\,\,
U_{\tau,k}^*\circ U_{\tau,k}=\Pi_k+O\left(k^{-\infty}\right).
$$
\end{cor}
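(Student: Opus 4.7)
My plan is to follow the iterative unitarization paradigm of \cite{z-index}, with Theorem \ref{thm:concentration-rate} (applied in the special case $\varrho_\tau\equiv 1$) and Corollary \ref{cor:condition-on-symbol} providing the obstruction analysis at each symbol order. Set $V_\tau=:\Pi\circ\left(\phi^X_{-\tau}\right)^*\circ\Pi$, so that $U_\tau=R_\tau V_\tau$. The compositions $V_\tau V_\tau^*$ and $V_\tau^* V_\tau$ are $S^1$-invariant zeroth order Toeplitz operators on $X$, since the Fourier integral operators $V_\tau$ and $V_\tau^*=\Pi\circ\left(\phi^X_\tau\right)^*\circ\Pi$ are associated with the graphs of $\phi^M_{-\tau}$ and $\phi^M_\tau$, which compose to the identity. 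Integrating the Gaussian $e^{\mathcal{S}_{\tau,m}(\mathbf{u},\mathbf{w})}$ against its complex conjugate along the $\mathbf{w}$-fiber (a stationary-phase evaluation of the near-diagonal kernel of $V_\tau V_\tau^*$) yields the leading symbol of $V_\tau V_\tau^*$ as $\sigma_0(m)=2^\mathrm{d}/\nu(\tau,m)$, which is smooth and strictly positive.

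Now seek $R_\tau$ as a classical $S^1$-invariant zeroth order Toeplitz operator with symbol $\varrho_\tau\sim\varrho_{\tau,0}+\varrho_{\tau,-1}+\cdots$, and impose $R_\tau V_\tau V_\tau^*R_\tau^*\sim\Pi$ order by order. The leading equation $|\varrho_{\tau,0}|^2\sigma_0=1$ forces $|\varrho_{\tau,0}(m)|=2^{-\mathrm{d}/2}\sqrt{\nu(\tau,m)}$, in agreement with Corollary \ref{cor:condition-on-symbol}; pick the positive real root, which is smooth in $(\tau,m)$ since $\nu>0$. At the $j$-th step of the induction one faces a linear equation of the form
$$
2\,\mathrm{Re}\bigl(\varrho_{\tau,0}\,\sigma_0\,\overline{\varrho_{\tau,-j}}\bigr)=F_{-j},
$$
where $F_{-j}$ is a smooth function of $(\tau,m)$ built from the already chosen symbols. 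Because $\varrho_{\tau,0}\sigma_0>0$ everywhere, this is solvable (e.g.\ with $\varrho_{\tau,-j}$ real), and Borel summation carried out smoothly in $\tau$ produces a Toeplitz $R_\tau$ for which $R_\tau V_\tau V_\tau^*R_\tau^*-\Pi$ is smoothing.

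The companion relation $V_\tau^*R_\tau^*R_\tau V_\tau\sim\Pi$ then comes for free. Indeed, once $U_\tau U_\tau^*=\Pi+$(smoothing) has been arranged, the restriction $U_{\tau,k}U_{\tau,k}^*=\Pi_k+O(k^{-\infty})$ on the finite-dimensional isotype $H(X)_k$ forces every singular value of $U_{\tau,k}$ to lie within $O(k^{-\infty})$ of $1$, whence $U_{\tau,k}^*U_{\tau,k}=\Pi_k+O(k^{-\infty})$ as well, uniformly in $k$. The only real technical point is to verify that each symbolic correction can be realized inside the $S^1$-invariant Toeplitz calculus of \cite{bg} and that the construction depends smoothly on $\tau$; both are routine features of the Boutet de Monvel--Sj\"ostrand framework. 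The main obstacle, insofar as there is one, is thus bookkeeping: identifying cleanly the symbol $\sigma_0$ and the obstruction cocycle $F_{-j}$ at each order, which is what the machinery of Theorem \ref{thm:concentration-rate} is designed to produce.
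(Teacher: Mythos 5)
Your proposal is correct and follows the same iterative unitarization paradigm as the paper, but it routes the symbol obstruction analysis differently. You work abstractly in the Toeplitz symbol calculus: you explicitly invoke that $V_\tau V_\tau^*$ is a zeroth order $S^1$-invariant Toeplitz operator (by Hermite FIO composition, as in \cite{bg} and \cite{z-index}), read off its leading symbol $\sigma_0 = 2^{\mathrm{d}}/\nu(\tau,m)$ from the Gaussian integral, and then solve the linear equations $2\,\mathrm{Re}\bigl(\varrho_{\tau,0}\,\sigma_0\,\overline{\varrho_{\tau,-j}}\bigr)=F_{-j}$ order by order. The paper instead performs the iteration by matching the coefficients of the \emph{on-diagonal} equivariant kernel expansion $\bigl(U_{\tau,k}U_{\tau,k}^*\bigr)(x,x)$ against $\Pi_k(x,x)\sim (k/\pi)^{\mathrm{d}}+\sum_{j\ge 1}k^{\mathrm{d}-j}a_j(m)$, explicitly perturbing $R_\tau$ by corrections of the form $T^{-j}\,\Pi\circ M_{f_j}\circ\Pi$ with $f_j$ real, and using Theorem \ref{thm:concentration-rate} at each step to track the contribution of each new term; this route showcases the scaling asymptotics that are the subject of the paper, whereas yours is closer to Zelditch's original argument in \cite{z-index}. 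Both are valid, and both rest on the same structural input (the Toeplitz FIO composition theory, used implicitly by the paper to justify passing from on-diagonal agreement to operator agreement modulo smoothing). Your treatment of the companion identity $U_{\tau,k}^*U_{\tau,k}=\Pi_k+O(k^{-\infty})$ via singular values of the finite square matrix $U_{\tau,k}$ is clean and, if anything, more transparent than the paper's $A\ge 0$, $A=A^2+O(k^{-\infty})$ argument, which as stated only pins eigenvalues near $\{0,1\}$ and implicitly requires the singular-value identification (or a trace count) to rule out the near-zero branch.
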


It follows from the proof of Corollary \ref{cor:esiste-unitario}
that there is a canonical choice for $R_\tau$, up to smoothing operators.

\begin{rem}
The functional argument on page 327 of \cite{z-index} shows that $U_\tau$ may be modified so as to assume that it is
actually unitary on the complement of a finite dimensional subspace of $H(X)$.
\end{rem}

\begin{rem}
If $A=A_{\tau,m}$, by (\ref{eqn:nu-compare})
we can rewrite the right hand side of (\ref{eqn:condition-on-symbol}) as
$$
2^{-\mathrm{d}/2}\,\det\big(A\,J_0+J_0\,A\big)^{1/4},
$$
which tallies with the multiplier determined in \S 6 of \cite{d} for the linear case.
\end{rem}

Finally, to leading order $U_\tau$ satisfies the Shr\"{o}dinger equation associated to $f$.
Namely, let $D_\theta=:-i\,\partial/\partial\theta$, where $\partial/\partial\theta$
is the generator of the $S^1$-action on $X$, and $\widetilde{T}_f=:D_\theta\circ T_f$.
Thus $D_\theta$ is the \lq number operator\rq\, equal to $k\,\mathrm{id}_{H(X)_k}$ on
$H(X)_k$, and $\widetilde{T}_f$ is a self-adjoint invariant first order Toeplitz operator,
and its restriction to $H(X)_k$ is $\widetilde{T}_f^{(k)}=k\,T_f^{(k)}$.

\begin{prop}
\label{prop:1-parameter-group}
In the situation of Theorem \ref{thm:concentration-rate},
\begin{eqnarray*}
\lefteqn{\left.\frac{d }{d\tau}U_{\tau,k}\right|_{\tau_0}
\left(x+\frac{\mathbf{u}}{\sqrt{k}},x_{\tau_0}+\frac{\mathbf{w}}{\sqrt{k}}\right)}\\
&=&\left(i\,\widetilde{T}_f^{(k)}\circ U_{\tau_0,k}\right)\left(x+\frac{\mathbf{u}}{\sqrt{k}},x_{\tau_0}+\frac{\mathbf{w}}{\sqrt{k}}\right)
+ e^{\mathcal{S}_{\tau_0,m}(\mathbf{u},\mathbf{w})}\cdot O\left(k^{\mathrm{d}+1/2}\right).
\end{eqnarray*}
\end{prop}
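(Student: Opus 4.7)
The plan is to differentiate the defining identity (\ref{eqn:defn-U-tau}) in $\tau$, identify the derivative as $i\widetilde T_f\circ U_{\tau_0}$ modulo lower-order Toeplitz contributions of the same structural form as $U_{\tau_0}$, and then estimate those contributions at rescaled points by invoking Theorem \ref{thm:concentration-rate}. Since $\phi^X_\tau$ is the flow generated by $\widetilde{\upsilon}_f$, one has $\frac{d}{d\tau}(\phi^X_{-\tau})^*=-\widetilde{\upsilon}_f\circ(\phi^X_{-\tau})^*$, and the Leibniz rule together with smoothness of $R_\tau$ in $\tau$ yields
\begin{equation*}
\frac{d}{d\tau}U_\tau\Big|_{\tau_0}=\dot R_{\tau_0}\circ\Pi\circ\bigl(\phi^X_{-\tau_0}\bigr)^*\circ\Pi-R_{\tau_0}\circ\Pi\circ\widetilde{\upsilon}_f\circ\bigl(\phi^X_{-\tau_0}\bigr)^*\circ\Pi.
\end{equation*}
Inserting $\Pi$ between $\widetilde{\upsilon}_f$ and $\bigl(\phi^X_{-\tau_0}\bigr)^*$ splits the second term as $R_{\tau_0}\Pi\widetilde{\upsilon}_f\Pi\bigl(\phi^X_{-\tau_0}\bigr)^*\Pi$ plus an insertion error $R_{\tau_0}\Pi\widetilde{\upsilon}_f(I-\Pi)\bigl(\phi^X_{-\tau_0}\bigr)^*\Pi$, which in the Boutet de Monvel--Guillemin calculus is a zeroth order Toeplitz contribution composed with $\bigl(\phi^X_{-\tau_0}\bigr)^*\Pi$.

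The next key ingredient is the Toeplitz-calculus identity
\begin{equation*}
\Pi\circ\widetilde{\upsilon}_f\circ\Pi=-i\,\widetilde T_f+\mathcal E,
\end{equation*}
where $\mathcal E$ is an invariant zeroth order Toeplitz operator. Morally, decomposing $\widetilde{\upsilon}_f$ into its horizontal and vertical parts, the vertical part $-f\,\partial_\theta$ gives $-i\,\Pi M_fD_\theta\Pi=-i\widetilde T_f$, while the horizontal part maps holomorphic sections into antiholomorphic directions modulo a zeroth order Toeplitz remainder. Substituting this, and commuting $\widetilde T_f$ past $R_{\tau_0}$ (using $\widetilde T_f(I-\Pi)=0$ and the fact that $[R_{\tau_0},\widetilde T_f]$ is a zeroth order Toeplitz operator, since $R_{\tau_0}$ has order $0$ and $\widetilde T_f$ has order $1$), we arrive at the operator identity
\begin{equation*}
\frac{d}{d\tau}U_\tau\Big|_{\tau_0}=i\,\widetilde T_f\circ U_{\tau_0}+\mathcal B_{\tau_0}\circ\Pi\circ\bigl(\phi^X_{-\tau_0}\bigr)^*\circ\Pi,
\end{equation*}
where $\mathcal B_{\tau_0}$ is a zeroth order invariant Toeplitz operator gathering $\dot R_{\tau_0}$, $R_{\tau_0}\mathcal E$, the insertion error symbol, and $[\widetilde T_f,R_{\tau_0}]$.

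Restricting to $H(X)_k$ and applying Theorem \ref{thm:concentration-rate} with $\mathcal B_{\tau_0}$ in place of $R_\tau$, the Schwartz kernel of the remainder at the rescaled points is of the form $k^{\mathrm d}\cdot e^{\mathcal S_{\tau_0,m}(\mathbf u,\mathbf w)}\cdot O(1)$, uniformly on the scaling window $\|\mathbf u\|,\|\mathbf w\|\le Ek^{1/9}$ and in particular on the normal subspace $N_{\tau_0,m}$; this is \emph{a fortiori} $e^{\mathcal S_{\tau_0,m}(\mathbf u,\mathbf w)}\cdot O(k^{\mathrm d+1/2})$, which is the claimed bound. (The leading term $i\widetilde T_f^{(k)}\circ U_{\tau_0,k}$ itself is of order $k^{\mathrm d+1}\,e^{\mathcal S_{\tau_0,m}}$, since $\widetilde T_f^{(k)}=kT_f^{(k)}$ and Theorem \ref{thm:concentration-rate} gives $U_{\tau_0,k}\sim k^{\mathrm d}e^{\mathcal S_{\tau_0,m}}$, so the error sits a half power of $k$ below the dominant scale.)

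The main obstacle is establishing the Toeplitz-calculus identity $\Pi\widetilde{\upsilon}_f\Pi=-i\widetilde T_f+\mathcal E$ and certifying that the insertion error $\Pi\widetilde{\upsilon}_f(I-\Pi)\bigl(\phi^X_{-\tau_0}\bigr)^*\Pi$, the Egorov-type commutator $[R_{\tau_0},\widetilde T_f]$, and the contribution of $\mathcal E$ all fall into the class of zeroth order invariant Toeplitz operators to which Theorem \ref{thm:concentration-rate} applies uniformly on the scaling window; once that is in place, the kernel estimate at the rescaled points is immediate from the theorem.
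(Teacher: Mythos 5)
Your approach is genuinely different from the paper's. You differentiate (\ref{eqn:defn-U-tau}) at the operator level, reduce the derivative to $i\,\widetilde{T}_f\circ U_{\tau_0}$ plus a remainder of the form $\mathcal{B}_{\tau_0}\circ\Pi\circ(\phi^X_{-\tau_0})^*\circ\Pi$ with $\mathcal{B}_{\tau_0}$ a zeroth order invariant Toeplitz operator, and then invoke Theorem \ref{thm:concentration-rate} as a black box to bound the remainder kernel by $k^{\mathrm{d}}\,e^{\mathcal{S}_{\tau_0,m}}\cdot O(1)$. The paper instead never leaves the oscillatory-integral picture: it rescales the time parameter, $\widetilde{U}_{\tau,k}:=U_{\tau_0+\tau/\sqrt{k},k}$, re-expands the phase $\widetilde{\Psi}_2$ in (\ref{eqn:psi-riscalata}) after the angular rescaling $(\theta,\vartheta)\mapsto(\theta,\vartheta)/\sqrt{k}$, finds the $\tau$-dependent stationary point $P_\tau=(1,0,1,-\tau f(m))$ with value $\tau f(m)$, and reads off the $\tau$-derivative of the resulting expansion, with overall prefactor $e^{i\sqrt{k}\,\tau f(m)}$, directly. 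Your route, if made rigorous, would in fact yield the slightly stronger bound $e^{\mathcal{S}_{\tau_0,m}}\cdot O(k^{\mathrm{d}})$, and it has the structural virtue of exhibiting the remainder as itself an object of the same class as $U_\tau$; the paper's route is computationally heavier but never has to confront Toeplitz composition questions. Your approach is also essentially the one taken by Zelditch in \cite{z-index} (the Heisenberg-type evolution equation for the quantized contact map), whereas the paper deliberately re-derives this from scaling asymptotics.

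The one place you need to be careful is the gap you yourself flag, and it is a real one, not just bookkeeping. Splitting $\Pi\,\widetilde{\upsilon}_f\,(\phi^X_{-\tau_0})^*\,\Pi$ by inserting $\Pi$ produces the "insertion error" $\Pi\,\widetilde{\upsilon}_f\,(I-\Pi)\,(\phi^X_{-\tau_0})^*\,\Pi$, and it is \emph{not} evident that this factors as $\mathcal{B}\circ\Pi\,(\phi^X_{-\tau_0})^*\,\Pi$ with $\mathcal{B}$ a zeroth order Toeplitz operator, since $(I-\Pi)(\phi^X_{-\tau_0})^*\Pi$ does not pass through $\Pi$. The clean statement you actually need is that the full operator $\Pi\,\widetilde{\upsilon}_f\,(\phi^X_{-\tau_0})^*\,\Pi$, which is a Toeplitz Fourier integral operator of order one higher than $\Pi\,(\phi^X_{-\tau_0})^*\,\Pi$ and supported on the same canonical relation over $\Sigma$, can be written as $-i\,\widetilde{T}_f\circ\Pi\,(\phi^X_{-\tau_0})^*\,\Pi$ plus a term of the required factored form one order down. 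This is a Boutet de Monvel--Guillemin symbolic-calculus statement about Fourier--Hermite distributions (essentially the content of \cite{z-index}, \S 3--4), and it needs to be stated and proved, not asserted; in particular, "maps holomorphic sections into antiholomorphic directions modulo a zeroth order Toeplitz remainder" is the conclusion you want, not a known input. Similarly, you should verify that $\mathcal{B}_{\tau_0}$ is $S^1$-invariant (so that Theorem \ref{thm:concentration-rate} literally applies) and depends smoothly on $\tau_0$. Once those Toeplitz-calculus facts are certified, the rest of your argument goes through, and the final kernel bound is correct.
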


Under favorable conditions
Theorems \ref{thm:rapid-decay} and \ref{thm:concentration-rate} also yield an asymptotic expansion for
the trace of $U_{\tau,k}$. Let us consider the simplest case where $U_{\tau,k}$ only has isolated and
non-degenerate fixed points $m_1,\ldots,m_r$. Then on the one hand by Theorem \ref{thm:rapid-decay}
$$
\mathrm{trace}\big(U_{\tau,k}\big)=\int_XU_{\tau,k}(x,x)\,dV_X(x)\sim
\sum_{i=1}^r\int_{X_i}\gamma_i(x)\,U_{\tau,k}(x,x)\,dV_X(x),
$$
where $\gamma_i$ is an invariant bump function, supported near $\pi^{-1}(x_i)$
and identically equal to one in a small neighborhood of $\pi^{-1}(x_i)$.

On the other hand, working in rescaled Heisenberg local coordinates centered at some
$x_i$ lying over $m_i$, and writing $\mathcal{S}_{\tau,m_i}(\mathbf{u},\mathbf{u})=-(1/2)\,\mathbf{u}^tS_{i}\mathbf{u}$
for a symmetric matrix $S_{i}$ with positive real part, by Theorem \ref{thm:concentration-rate} we have
\begin{eqnarray*}
\lefteqn{\int_{X_i}\gamma_i(x)\,U_{\tau,k}(x,x)\,dV_X(x)}\\
& \sim &k^{-\mathrm{d}}\cdot\int_{\mathbb{C}^\mathrm{d}}
\gamma_i\left(x_i+\frac{\mathbf{u}}{\sqrt{k}}\right)\,
U_{\tau,k}\left(x_i+\frac{\mathbf{u}}{\sqrt{k}},
x_i+\frac{\mathbf{u}}{\sqrt{k}}\right)\,d\mathbf{v}\\
&=&k^{-\mathrm{d}}\varrho_\tau(m_i)\,\left(\frac k\pi\right)^\mathrm{d}\,\frac{2^\mathrm{d}}{\nu(\tau,m_i)}\,
\int_{\mathbb{C}^\mathrm{d}}e^{-\frac 12\,\mathbf{u}^tS_{i}\mathbf{u}}\,d\mathbf{u} +\mathrm{L.O.T.}\\
&=&\frac{2^{2\mathrm{d}}}{\nu(\tau,m_i)}\,\det\left(S_i\right)^{-1/2}+\mathrm{L.O.T.}
\end{eqnarray*}
(here L.O.T. = lower order terms). Similar expansions may be obtained for higher dimensional symplectic fixed
loci, adapting the arguments in \cite{pao-trace}, but won't be discussed here.

For the sake of simplicity, we have restricted our exposition to the
complex projective setting; however, by the microlocal theory developed in \cite{sz},
the previous results can be generalized to the case of almost
complex symplectic manifolds.

\section{Proof of Theorem \ref{thm:rapid-decay}.}

Let $\Pi_\tau=:\left(\phi^X_{-\tau}\right)^*\circ \Pi$. In terms of Schwartz kernels,
$$
\Pi_\tau=\left( \phi^X_{-\tau}\times \mathrm{id}_X\right)^*(\Pi).
$$
Then $U_\tau=R_\tau\circ \Pi_\tau$, and since $R_\tau$ and $\Pi_\tau$ are $S^1$-invariant, they
preserve each $S^1$-equivariant summand $L^2(X)_k\subseteq L^2(X)$. Therefore, the restriction
$U_{\tau,k}:H(X)_k\rightarrow H(X)_k$ is a composition $U_{\tau,k}=R_{\tau,k}\circ \Pi_{\tau,k}$, where
$R_{\tau,k}$ and  $\Pi_{\tau,k}$ are the restrictions of
$U_\tau$ and $R_\tau$. In fact, since $R_\tau$ and $\Pi_\tau$ commute with the orthogonal
projection onto $L^2(X)_k$, we have
\begin{equation}
\label{eqn:composizione-equivariante}
U_{\tau,k}(x,y)=R_{\tau,k}\circ \Pi_{\tau,k}=R_{\tau,k}\circ \Pi_{\tau}=R_{\tau}\circ \Pi_{\tau,k}.
\end{equation}
Using distributional kernels, we can rewrite (\ref{eqn:composizione-equivariante}) in the form
\begin{eqnarray}
\label{eqn:composizione-integrata}
U_{\tau,k}(x,y)&=&\int_X R_{\tau,k}(x,z) \Pi_{\tau,k}(z,y)\,d\mu_X(z)\nonumber\\
&=&\int_X R_{\tau,k}(x,z) \Pi_{k}\left(\phi^X_{-\tau}(z),y\right)\,d\mu_X(z).
\end{eqnarray}
where clearly $R_{\tau,k},\,\Pi_{\tau,k}\in \mathcal{C}^\infty(X\times X)$. Furthermore, since
$\Pi$ is $\mathcal{C}^\infty$ on $X\times X\setminus \mathrm{diag}(X)$
by \cite{f} \cite{bs}, so is $R_\tau$; therefore,
$R_{\tau,k}(x,z)=O\left(k^{-\infty}\right)$ as $k\rightarrow +\infty$, uniformly on the locus where
$\mathrm{dist}_X\left(x,S^1\cdot z\right)\ge \delta$, for any fixed $\delta>0$. Similarly,
$\Pi_{k}\left(\phi^X_{-\tau}(z),y\right)=O\left(k^{-\infty}\right)$
uniformly for $\mathrm{dist}_X\left(y,S^1\cdot\phi^X_{-\tau}(z)\right)\ge \delta$. It is well-known that
$R_{\tau,k}(x,x)=\Pi_{k}(x,x)=O\left(k^\mathrm{d}\right)$.

\begin{lem}
For any $\epsilon>0$, we have $U_{\tau,k}(x,y)=O\left(k^{-\infty}\right)$ as $k\rightarrow +\infty$,
uniformly for $\mathrm{dist}_X\left(y,\phi^X_{-\tau}(x)\right)\ge \epsilon$.
\end{lem}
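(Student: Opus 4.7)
I will deduce the estimate directly from the integral representation (\ref{eqn:composizione-integrata}), by decomposing the $z$-integration into two pieces on which, respectively, the factor $R_{\tau,k}(x,z)$ and the factor $\Pi_{k}(\phi^X_{-\tau}(z),y)$ is already known to be rapidly decreasing, and by showing that these two pieces cover all of $X$ once the geometric threshold is chosen small enough compared to $\epsilon$. I will work with the $S^1$-invariant reading of the hypothesis, $\mathrm{dist}_X(y,S^1\cdot x_\tau)\ge \epsilon$ with $x_\tau=\phi^X_{-\tau}(x)$, which is the only version carrying real content in view of the equivariance (\ref{eqn:equivarianza-U-k}) and is precisely what is needed to feed Theorem \ref{thm:rapid-decay}.

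The key geometric step is the following. Let $C$ be a Lipschitz constant for the contactomorphism $\phi^X_{-\tau}$, fix $\delta\in\bigl(0,\epsilon/(1+C)\bigr)$, and set
\begin{align*}
A_1&=\bigl\{z\in X:\mathrm{dist}_X(x,S^1\cdot z)\ge\delta\bigr\},\\
A_2&=\bigl\{z\in X:\mathrm{dist}_X\bigl(y,S^1\cdot \phi^X_{-\tau}(z)\bigr)\ge\delta\bigr\}.
\end{align*}
I claim $A_1\cup A_2=X$. Indeed, if $z\notin A_1$, write $z=r_\vartheta(x')$ with $\mathrm{dist}_X(x,x')<\delta$; since the contact lift $\widetilde{\upsilon}_f$ is $S^1$-invariant, $\phi^X_{-\tau}$ commutes with the $S^1$-action, and so $\phi^X_{-\tau}(z)=r_\vartheta(\phi^X_{-\tau}(x'))$ lies within $C\delta$ of $r_\vartheta(x_\tau)\in S^1\cdot x_\tau$. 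If in addition $z\notin A_2$, the triangle inequality forces $\mathrm{dist}_X(y,S^1\cdot x_\tau)<(1+C)\delta<\epsilon$, contradicting the hypothesis.

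Finally I split the integral in (\ref{eqn:composizione-integrata}) according to this cover. On $A_1$ the stated off-diagonal bound gives $R_{\tau,k}(x,z)=O(k^{-\infty})$ uniformly, while the other factor is controlled polynomially by $|\Pi_{k}(\phi^X_{-\tau}(z),y)|\le \Pi_{k}(y,y)^{1/2}\,\Pi_{k}(\phi^X_{-\tau}(z),\phi^X_{-\tau}(z))^{1/2}=O(k^{\mathrm{d}})$ via the reproducing-kernel Cauchy-Schwarz inequality. On $A_2$ the roles are reversed, the polynomial bound on $R_{\tau,k}(x,z)$ coming from the $L^2$-boundedness of $R_\tau$ applied to the same reproducing-kernel identity. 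Either way the integrand is uniformly $O(k^{-\infty})$, and the compactness of $X$ delivers the conclusion. The only non-routine step is the geometric dichotomy $A_1\cup A_2=X$, which rests entirely on the $S^1$-equivariance of the contact flow; the rest is bookkeeping around the off-diagonal decay of $\Pi$ from \cite{f}, \cite{bs} (inherited by $R_\tau$) and standard on-diagonal polynomial bounds.
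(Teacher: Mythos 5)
Your proof is correct and follows essentially the same approach as the paper's: both decompose the integral in (\ref{eqn:composizione-integrata}) according to whether $z$ is near or far from $S^1\cdot x$, use the Lipschitz bound on $\phi^X_{-\tau}$ together with the triangle inequality to propagate the separation hypothesis, and on each piece pair the off-diagonal rapid decay of one factor ($R_{\tau,k}$ or $\Pi_k$) with the on-diagonal polynomial bound on the other. The only cosmetic difference is that the paper builds its cover $\{V,W\}$ directly from a single threshold on $\mathrm{dist}_X(z,S^1\cdot x)$ and then verifies the needed lower bound on $\mathrm{dist}_X(y,\phi^X_{-\tau}(z))$ inside $W$, whereas you define the two regions $A_1,A_2$ by the respective separation conditions and prove $A_1\cup A_2=X$ as a standalone claim; you are also a bit more explicit than the paper about reading the hypothesis $S^1$-invariantly, which is indeed the version the proof (in both forms) actually delivers.
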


\begin{proof}
Let $C,c>0$ be such that for any $x_1,x_2\in X$ we have
\begin{equation}
\label{eqn:boundf-on-distance}
c\cdot\mathrm{dist}_X\left(x_1,x_2\right)\le \mathrm{dist}_X\left(\phi^X_{-\tau}(x_1),\phi^X_{-\tau}(x_2)\right)
\le C\cdot\mathrm{dist}_X\left(x_1,x_2\right),
\end{equation}
and similarly for any $m_1,m_2\in M$
\begin{equation}
\label{eqn:boundf-on-distance-M}
c\cdot\mathrm{dist}_M\left(m_1,m_2\right)\le \mathrm{dist}_M\left(\phi^M_{-\tau}(m_1),\phi^M_{-\tau}(m_2)\right)
\le C\cdot\mathrm{dist}_M\left(m_1,m_2\right).
\end{equation}

Choose $\epsilon>0$ arbitrarily small, and suppose
$\mathrm{dist}_X\left(y,S^1\cdot \phi^X_{-\tau}(x)\right)\ge \epsilon$. Define
$$
V=:\left\{z\in X: \mathrm{dist}_X\left(z, S^1\cdot x\right)>\frac{\epsilon}{3C}\right\}=
\left\{z\in X: \mathrm{dist}_M\big(\pi(z), \pi(x)\big)>\frac{\epsilon}{3C}\right\},
$$
$$
W=:\left\{z\in X: \mathrm{dist}_X\left(z, S^1\cdot x\right)<\frac{\epsilon}{2C}\right\}=
\left\{z\in X: \mathrm{dist}_M\big(\pi(z), \pi(x)\big)<\frac{\epsilon}{2C}\right\}.
$$
Then $\{V,W\}$ is an invariant open cover of $X$;
let $\{1-\varrho,\varrho\}$ be an invariant partition of unity subordinate to it.
We can rewrite (\ref{eqn:composizione-integrata}) as
\begin{eqnarray}
\label{eqn:composizione-integrata-partizionata}
U_{\tau,k}(x,y)&=&\int_V \big(1-\varrho(z)\big)\cdot R_{\tau,k}(x,z) \Pi_{k}\left(\phi^X_{-\tau}(z),y\right)\,d\mu_X(z)\nonumber\\
&&+\int_W \varrho(z)\cdot R_{\tau,k}(x,z) \Pi_{k}\left(\phi^X_{-\tau}(z),y\right)\,d\mu_X(z).
\end{eqnarray}

Uniformly in $z\in V$, we have on the one hand $\Pi_{k}\left(\phi^X_{-\tau}(z),y\right)=O\left(k^{\mathrm{d}}\right)$, and on the
other $R_{\tau,k}(x,z)=O\left(k^{-\infty}\right)$. Therefore, the first summand on the right hand side of
(\ref{eqn:composizione-integrata-partizionata}) rapidly decreasing as
$k\rightarrow +\infty$.

Uniformly in $z\in W$, we have on the one hand $R_{\tau,k}(x,z)=O\left(k^{d}\right)$, and on the other
\begin{eqnarray}
\label{eqn:boundf-on-distance-1}
\mathrm{dist}_X\left(y,\phi^X_{-\tau}(z)\right)&\ge& \mathrm{dist}_X\left(y,\phi^X_{-\tau}(x)\right)-
\mathrm{dist}_X\left(\phi^X_{-\tau}(x),\phi^X_{-\tau}(z)\right)\\
&\ge&\mathrm{dist}_X\left(y,\phi^X_{-\tau}(x)\right)-
C\cdot \mathrm{dist}_X\left(x,z\right)> \frac 12\,\epsilon;\nonumber
\end{eqnarray}
therefore, $\Pi_{k}\left(\phi^X_{-\tau}(z),y\right)=O\left(k^{-\infty}\right)$ on $W$, and the second summand is
also rapidly decreasing.
\end{proof}

We may thus assume that $(x,y)$ lies in an arbitrary small invariant tubular neighborhood of
$\widetilde{\mathrm{graph}}\left(\phi^X_{-\tau}\right)$, that is,
$\mathrm{dist}_X\left(y,S^1\cdot x_\tau\right)<\epsilon$ for some small $\epsilon>0$,
where $x_\tau=:\phi^X_{-\tau}(x)$.
In view of (\ref{eqn:equivarianza-U-k}), we may as well assume $\mathrm{dist}_X\left(y,x_\tau\right)<\epsilon$,
hence that $y=x_\tau+O(\epsilon)$ in any given system of local coordinates.

Let us define
\begin{eqnarray*}
V'&=:&\left\{z\in X:\,\mathrm{dist}_X\left(z,S^1\cdot x\right)>\frac 2c\cdot \epsilon\right\},\\
W'&=:&\left\{z\in X:\,\mathrm{dist}_X\left(z,S^1\cdot x\right)<\frac 3c\cdot\epsilon\right\},
\end{eqnarray*}
where $c$ is as in (\ref{eqn:boundf-on-distance}).
Let $\{1-\varrho',\varrho'\}$ be an invariant partition of unity on $X$, subordinate to the
open cover $\{V',W'\}$.
In distributional short-hand, using the last equality in (\ref{eqn:composizione-integrata}), we get
\begin{eqnarray}
\label{eqn:composizione-integrata-partizionata-1}
U_{\tau,k}(x,y)&=&\int_{V'} \big(1-\varrho'(z)\big)\cdot R_{\tau}(x,z) \Pi_{k}\left(\phi^X_{-\tau}(z),y\right)\,d\mu_X(z)\nonumber\\
&&+\int_{W'} \varrho'(z)\cdot R_{\tau}(x,z) \Pi_{k}\left(\phi^X_{-\tau}(z),y\right)\,d\mu_X(z).
\end{eqnarray}

\begin{lem}
The first integral on the
right hand side of (\ref{eqn:composizione-integrata-partizionata-1}) is $O\left(k^{-\infty}\right)$.
\end{lem}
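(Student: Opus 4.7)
On the support of $1-\varrho'$ the factor $R_\tau(x,z)$ is smooth (since the support forces $z\neq x$), so the only source of $O(k^{-\infty})$ decay in the integrand is the Szeg\"{o} kernel $\Pi_k\big(\phi^X_{-\tau}(z),y\big)$. My plan is to establish a uniform positive separation between $\phi^X_{-\tau}(z)$ and the orbit $S^1\cdot y$ for $z$ on the support of $1-\varrho'$, then invoke the standard off-diagonal rapid decay of the equivariant Szeg\"{o} kernel, and bound the companion factor by a constant on the compact set to which the integration reduces.

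First I will obtain the separation estimate. Because we have reduced to $\mathrm{dist}_X(y,x_\tau)<\epsilon$ and because $\phi^X_{-\tau}$ commutes with the $S^1$-action (being the canonical contact lift of a Hamiltonian symplectomorphism of $M$), applying the lower bound in (\ref{eqn:boundf-on-distance}) between $\phi^X_{-\tau}(z)$ and each $\phi^X_{-\tau}(r_\vartheta x)=r_\vartheta x_\tau$, and then infimizing over $\vartheta$, yields
\begin{equation*}
\mathrm{dist}_X\big(\phi^X_{-\tau}(z),S^1\cdot x_\tau\big)\ge c\cdot \mathrm{dist}_X\big(z,S^1\cdot x\big)>2\epsilon
\end{equation*}
for $z\in V'$. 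A triangle inequality combined with $\mathrm{dist}_X(y,x_\tau)<\epsilon$ then gives
\begin{equation*}
\mathrm{dist}_X\big(\phi^X_{-\tau}(z),S^1\cdot y\big)>\epsilon,
\end{equation*}
uniformly in $z\in\mathrm{supp}(1-\varrho')$ and in the admissible $(x,y)$.

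Second, I will combine this separation with the off-diagonal decay of the equivariant Szeg\"{o} kernel recalled above: $\Pi_k\big(\phi^X_{-\tau}(z),y\big)=O\left(k^{-\infty}\right)$ uniformly on this region. In parallel, since $R_\tau=\Pi\circ Q_\tau\circ \Pi$ is $\mathcal{C}^\infty$ off $\mathrm{diag}(X)$ by \cite{f} \cite{bs}, and $\mathrm{supp}(1-\varrho')$ forces $\mathrm{dist}_X(x,z)>2\epsilon/c$, the factor $(1-\varrho'(z))\,R_\tau(x,z)$ is smooth and, by compactness of $X$, uniformly bounded on the closed $S^1$-invariant set $\{(x,z)\in X\times X:\mathrm{dist}_X(z,S^1\cdot x)\ge 2\epsilon/c\}$. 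Multiplying the uniformly bounded smooth factor by the uniform $O\left(k^{-\infty}\right)$ factor and integrating over the finite-volume manifold $X$ delivers the claim.

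The only mild obstacle I foresee is maintaining uniformity in $x$: although the cut-off $\varrho'$ and the region $V'$ both depend on $x$, the pair $(x,z)$ appearing in the integrand ranges over an $x$-independent, compact subset of $X\times X$ on which $R_\tau$ is smooth, so the pointwise bounds can be chosen independently of the particular $(x,y)$ considered.
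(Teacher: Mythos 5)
Your proof is correct and follows essentially the same strategy as the paper's: exploit the smoothness and boundedness of $R_\tau(x,\cdot)$ on $V'$, apply the triangle inequality together with the lower Lipschitz bound in (\ref{eqn:boundf-on-distance}) to separate $\phi^X_{-\tau}(z)$ from $y$ by $\epsilon$, and then invoke the off-diagonal rapid decay of $\Pi_k$. The only, minor, difference is that you phrase the separation estimate in terms of the $S^1$-orbit distance $\mathrm{dist}_X\big(\phi^X_{-\tau}(z),S^1\cdot y\big)$, which is the form most directly matching the standard hypothesis for rapid decay of the equivariant Szeg\"{o} kernel, whereas the paper bounds $\mathrm{dist}_X\big(\phi^X_{-\tau}(z),y\big)$ directly; both yield the same conclusion.
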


\begin{proof}
On the $S^1$-invariant open set $V'\subseteq X$, $R_\tau(x,\cdot)$ is $\mathcal{C}^\infty$ and uniformly bounded.
Furthermore, for $z\in V'$ we have
\begin{eqnarray*}
\mathrm{dist}_X\left(\phi^X_{-\tau}(z),y\right)&\ge&
\mathrm{dist}_X\left(\phi^X_{-\tau}(x),\phi^X_{-\tau}(z)\right)-\mathrm{dist}_X\left(\phi^X_{-\tau}(x),y\right)\\
&\ge&c\,\mathrm{dist}_X\left(x,z\right)-\mathrm{dist}_X\left(\phi^X_{-\tau}(x),y\right)> c\,\frac 2c\cdot \epsilon-\epsilon
=\epsilon.
\end{eqnarray*}
Therefore, $\Pi_{k}\left(\phi^X_{-\tau}(z),y\right)=O\left(k^{-\infty}\right)$ uniformly for $z\in V'$.
\end{proof}

It follows that as $k\rightarrow +\infty$
\begin{eqnarray}
\label{eqn:composizione-integrata-partizionata-2}
U_{\tau,k}(x,y)
&\sim&\int_{W'} \varrho'(z)\cdot R_{\tau}(x,z) \Pi_{k}\left(\phi^X_{-\tau}(z),y\right)\,d\mu_X(z),
\end{eqnarray}
where $\sim$ stands for 'equal asympotics as'.
Now in (\ref{eqn:composizione-integrata-partizionata-2}) $z$ is in a small $S^1$-invariant neighborhood of $x$, while $\phi^X_{-\tau}(z)$
and $y$ are in a small $S^1$-invariant neighborhood of $x_\tau=\phi^X_{-\tau}(x)$. Perhaps disregarding
a smoothing term not contributing to the asymptotics, we may now introduce in (\ref{eqn:composizione-integrata-partizionata-2})
the microlocal descriptions of $R_{\tau}$ and $\Pi$ as Fourier integral operators from \cite{bs}, and work in
Heisenberg local coordinates centered at $x$ and $x_\tau$, respectively.

More precisely, by the discussion in \cite{bs}, \cite{bsz}, \cite{sz} we have
\begin{equation}
\label{eqn:pi-as-fio}
\Pi\left(x',x''\right)=:\int_0^{+\infty}e^{iu\,\psi\left(x',x''\right)}\,s\left(u,x',x''\right)\,du
\end{equation}
and
\begin{equation}
\label{eqn:R-as-fio}
R_\tau\left(y',y''\right)=:\int_0^{+\infty}e^{it\,\psi\left(y',y''\right)}\,a_\tau\left(t,y',y''\right)\,dt;
\end{equation}
here $\psi$ is a complex phase of positive type, essentially determined by the
Taylor expansion of the metric along the diagonal, and $s$, $a_\tau$ are semiclassical symbols. More precisely,
\begin{equation}
\label{eqn:asymp-exp-amplitues}
s\left(u,x',x''\right)\sim\sum_{j\ge 0}u^{\mathrm{d}-j}\,s_j\left(x',x''\right),\,\,\,\,\,
a\left(t,x',x''\right)\sim\sum_{j\ge 0}t^{\mathrm{d}-j}\,a_j\left(x',x''\right),
\end{equation}
and since we are working in Heisenberg local coordinates centered at $x$ and $x_\tau$, respectively, we have
\begin{equation}
\label{eqn:amplitudes-leading-term}
a_0\left(x,x\right)=\varrho_\tau(m)\,(k/\pi)^{\mathrm{d}},\,\,\,\,s_0\left(x_\tau,x_\tau\right)=(k/\pi)^{\mathrm{d}}.
\end{equation}
Inserting (\ref{eqn:pi-as-fio})
and (\ref{eqn:R-as-fio}) in (\ref{eqn:composizione-integrata-partizionata-2}), and performing the rescaling
$t\mapsto k\,t$ and $u\mapsto ku$, we get
\begin{eqnarray}
\label{eqn:composizione-integrata-partizionata-fio}
U_{\tau,k}(x,y)\sim
\frac{k^2}{2\pi}\,\int_0^{+\infty}\int_0^{\infty}\int_{-\pi}^\pi\int_{W'}e^{i\,k\Psi_1}\,\mathcal{A}_1\,dt\,du\,d\vartheta\,d\mu_X(z),
\end{eqnarray}
where
\begin{equation}
\label{eqn:def-of-phase}
\Psi_1=:t\,\psi(x,z)
+u\,\psi\left(r_\vartheta(z_\tau),y\right)-\vartheta,
\end{equation}
where $z_\tau=:\phi^X_{-\tau}(z)$,
and
$$
\mathcal{A}_1=:a_\tau (kt,x,z)\,s\big(ku,r_\vartheta(z_\tau),y\big).
$$

On the diagonal, we have $d_{(x,x)}\psi=(\alpha_x,-\alpha_x)$; more generally,
$d_{(r_\theta(x),x)}\psi=\left(e^{i\theta}\alpha_{r_\theta(x)},-e^{i\theta}\alpha_{x}\right)$.
Working in Heisenberg local coordinates near $x$, we can write $z=x+(\theta,\mathbf{v})$, where
$\|\mathbf{v}\|<(6/c)\,\epsilon$, say; consequently, in Heisenberg local coordinates near $x_\tau$
we have
$z_\tau=x_\tau+\big(\theta,\mathbf{v}'_\tau\big)$, where again
$\left\|\mathbf{v}'_\tau\right\|=O(\epsilon)$.
In other words, $z=r_\theta(x)+O(\epsilon)$, $z_\tau=r_\theta(x_\tau)+O(\epsilon)$, and on the other hand
$y=x_\tau+O(\epsilon)$. Therefore,
\begin{equation}
\label{eqn:derivata-theta-psi}
\partial_\theta\Psi_1=t\,\left[-e^{-i\theta}+O(\epsilon)\right]+u\,\left[e^{i(\theta+\vartheta)}+O(\epsilon)\right],
\end{equation}
\begin{equation}
\label{eqn:derivata-vartheta-psi}
\partial_\vartheta\Psi_1=u\,\left[e^{i(\theta+\vartheta)}+O(\epsilon)\right]-1.
\end{equation}

It follows that
\begin{equation}
\label{eqn:bound-on-gradient}
\left\|\nabla_{\theta,\vartheta}\Psi_1\right\|\ge \sqrt{(u-t)^2+(u-1)^2}+O\left(\|(t,u)\|\cdot \epsilon\right).
\end{equation}

Therefore, if $\epsilon$ is sufficiently small then
$\left\|\nabla_{\theta,\vartheta}\Psi_1\right\|$ remains bounded away from zero
when $(u,t)$ does not belong to a small neighborhood of $(1,1)$, and it is $\ge (1/2)\|(u,t)\|$, say,
as $(u,t)\rightarrow \infty$.

We can rewrite (\ref{eqn:composizione-integrata-partizionata-fio}) in local coordinates
with $d\mu_X(z)=\mathcal{V}(\theta,\mathbf{v})\,d\theta\,d\mathbf{v}$. In addition, $\theta$ and $\vartheta$ are really local
coordinates on $S^1$ and therefore, upon introducing appropriate partitions on unity on the circle,
the corresponding integration may be implicitly interpreted as compactly supported.

Integrating by parts in $(\theta,\vartheta)$ we deduce from (\ref{eqn:bound-on-gradient})
that we only miss a negligible contribution to the asymptotics as $k\rightarrow +\infty$, if
integration in $(t,u)$ is restricted to a compact
neighborhood of $(1,1)$. Therefore,

\begin{lem}
Suppose $E\gg 0$, and let $\varrho_1\in \mathcal{C}^\infty _0\big((1/E,E)\big)$ be $\ge 0$ everywhere and
$\equiv 1$ on $(2/E,E/2)$. Then
\begin{eqnarray}
\label{eqn:composizione-integrata-compatta}
U_{\tau,k}(x,y)\sim
\frac{k^2}{2\pi}\,\int_{1/E}^{E}\int_{1/E}^{E}
\int_{-\pi}^\pi\int_{W'}e^{i\,k\Psi_1}\,\mathcal{A}_2\,dt\,du\,d\vartheta\,d\mu_X(z),
\end{eqnarray}
where $\mathcal{A}_2=:\mathcal{A}_1\cdot \varrho_1(t)\,\varrho_1(u)$.
\end{lem}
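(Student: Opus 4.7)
\medskip

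The plan is to establish the asymptotic equivalence by showing that the contribution from the complement of the support of $\varrho_1(t)\varrho_1(u)$ is $O(k^{-\infty})$, using non-stationary phase integration by parts in the angular variables $(\theta,\vartheta)$. Concretely, since $\varrho_1\equiv 1$ on $(2/E,E/2)$ and vanishes outside $(1/E,E)$, the difference between the integrals in (\ref{eqn:composizione-integrata-partizionata-fio}) and (\ref{eqn:composizione-integrata-compatta}) is obtained by replacing $\mathcal{A}_1$ with $(1-\varrho_1(t)\varrho_1(u))\,\mathcal{A}_1$, and this factor is supported in the set where either $t\notin[2/E,E/2]$ or $u\notin[2/E,E/2]$. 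On this set, the critical point $(t,u)=(1,1)$ is avoided by a positive distance (since $E\gg 0$).

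The first step is to introduce the standard non-stationary phase operator
$$
L \;=\; \frac{1}{ik\,\|\nabla_{(\theta,\vartheta)}\Psi_1\|^2}\,\Bigl(\overline{\partial_\theta\Psi_1}\,\partial_\theta + \overline{\partial_\vartheta\Psi_1}\,\partial_\vartheta\Bigr),
$$
so that $L(e^{ik\Psi_1}) = e^{ik\Psi_1}$. After choosing an invariant partition of unity on $S^1$ to make the $(\theta,\vartheta)$ integrations compactly supported in their respective coordinate patches, one applies the adjoint $L^*$ repeatedly to the amplitude. The key input is the gradient bound (\ref{eqn:bound-on-gradient}): for $\epsilon$ sufficiently small (depending on $E$), this gives the uniform lower bound
$$
\|\nabla_{(\theta,\vartheta)}\Psi_1\| \;\ge\; c\,\max\bigl(1,\|(t,u)\|\bigr)
$$
on the support of $1-\varrho_1(t)\varrho_1(u)$, with $c>0$. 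This follows because $\sqrt{(u-t)^2+(u-1)^2}$ is bounded below by a positive constant on the part of that support where $\|(t,u)\|$ remains bounded, and is of order $\|(t,u)\|$ when $\|(t,u)\|$ is large.

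Next one must quantify the symbolic behavior of $\mathcal{A}_1 = a_\tau(kt,x,z)\,s(ku,r_\vartheta(z_\tau),y)$: by (\ref{eqn:asymp-exp-amplitues}), the amplitudes are classical symbols of order $\mathrm{d}$ in their first argument, so $|\partial^\alpha_{(\theta,\vartheta)}\mathcal{A}_1|\le C_\alpha k^{2\mathrm{d}}(1+\|(t,u)\|)^{2\mathrm{d}}$. Applying $(L^*)^N$ and using the gradient bound, each iteration produces a gain of $(k\max(1,\|(t,u)\|))^{-1}$, while differentiation of $\|\nabla\Psi_1\|^{-2}$ and of the amplitudes is absorbed by the same bound. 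Thus the inner $(\theta,\vartheta,z)$ integral is $O\bigl(k^{2\mathrm{d}-N}\,(1+\|(t,u)\|)^{2\mathrm{d}-N}\bigr)$. Multiplying by the $k^2$ prefactor and integrating in $(t,u)\in(0,\infty)^2$ gives a bound $O(k^{2+2\mathrm{d}-N})$ once $N>2\mathrm{d}+2$, which is $O(k^{-M})$ for any $M$ by taking $N$ large.

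The principal technical obstacle is bookkeeping uniformity as $\|(t,u)\|\to\infty$: the phase gradient grows linearly while the amplitude has polynomial growth of degree $2\mathrm{d}$, so the integration by parts must be iterated enough times to guarantee both decay in $k$ and integrability in $(t,u)$. This is resolved by the fact that the same denominator $\max(1,\|(t,u)\|)$ controls both the linear growth of $\|\nabla\Psi_1\|$ at infinity and its lower bound away from $(1,1)$ in the bounded regime, giving a single uniform estimate that handles both regimes simultaneously.
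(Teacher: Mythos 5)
Your proposal is correct and takes essentially the same route as the paper: the paper compresses the argument into the remark that, by the gradient bound (\ref{eqn:bound-on-gradient}) and after introducing partitions of unity on $S^1$, iterated integration by parts in $(\theta,\vartheta)$ shows the contribution away from a compact neighborhood of $(t,u)=(1,1)$ is negligible. You have simply fleshed this out with the explicit non-stationary phase operator $L$, the symbol bounds on $\mathcal{A}_1$, and the quantitative lower bound $\|\nabla_{(\theta,\vartheta)}\Psi_1\|\gtrsim\max(1,\|(t,u)\|)$ needed for both decay in $k$ and integrability at infinity in $(t,u)$ — exactly the estimates the paper is relying on implicitly.
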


Given $D>0$ and with $C>0$ as in (\ref{eqn:boundf-on-distance}), let us consider the invariant open sets
\begin{equation*}
V'_k=:\left\{z\in W':\mathrm{dist}_X\left(z,S^1\cdot x\right)>\frac{D}{2C}\cdot k^{\varepsilon -1/2}\right\},
\end{equation*}
\begin{equation*}
W'_k=:\left\{z\in W':\mathrm{dist}_X\left(z,S^1\cdot x\right)<\frac{2D}{3C}\cdot k^{\varepsilon -1/2}\right\},
\end{equation*}
and let $\left\{1-\gamma_k,\,\gamma_k\right\}$ be an invariant partition of unity on $X$ subordinate to it;
we may assume that in local Heisenberg coordinates we have
$\gamma _k(z)=\gamma_1\left(k^{1/2-\epsilon}\,\|\mathbf{v}\|\right)$.

We can then rewrite (\ref{eqn:composizione-integrata-partizionata-fio}) as follows
\begin{eqnarray}
\label{eqn:composizione-integrata-partizionata-k}
U_{\tau,k}(x,y)&\sim&
\frac{k^2}{2\pi}\,
\int_{1/E}^{E}\int_{1/E}^{E}\int_{-\pi}^\pi\int_{V'_k}e^{i\,k\Psi_1}\,
\big(1-\gamma_k(z)\big)\,\mathcal{A}_1\,dt\,du\,d\vartheta\,d\mu_X(z)
\nonumber\\
&&+\frac{k^2}{2\pi}\,
\int_{1/E}^{E}\int_{1/E}^{E}\int_{-\pi}^\pi\int_{W'_k}e^{i\,k\Psi_1}\,\gamma_k(z)\,
\mathcal{A}_1\,dt\,du\,d\vartheta\,d\mu_X(z).
\end{eqnarray}

\begin{lem}
\label{lem:reduction-k}
The first summand on the right hand side of (\ref{eqn:composizione-integrata-partizionata-k}) is $O\left(k^{-\infty}\right)$.
\end{lem}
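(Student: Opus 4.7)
The plan is to exploit the positive-type character of the Boutet de Monvel--Sj\"ostrand phase $\psi$ to obtain super-polynomial decay of the integrand on the support of $1-\gamma_k$. I would first recall from \cite{bs} (together with the explicit Heisenberg expansion of $\psi$ in \cite{sz}) that there exists $c_0>0$ such that, in Heisenberg coordinates centered at $x$, writing $z=x+(\theta,\mathbf{v})$ with $|\theta|\le\pi$ and $\|\mathbf{v}\|\le\epsilon$ sufficiently small,
\[
\mathrm{Im}\bigl(\psi(x,z)\bigr)\;\ge\;c_0\bigl(1-\cos\theta + \|\mathbf{v}\|^2\bigr)\;\ge\;c_0\,\|\mathbf{v}\|^2,
\]
and likewise $\mathrm{Im}\bigl(\psi(r_\vartheta(z_\tau),y)\bigr)\ge 0$.

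On the support of the first summand of (\ref{eqn:composizione-integrata-partizionata-k}) one has $z\in V'_k$, hence $\|\mathbf{v}\|\ge \tfrac{D}{2C}\,k^{\varepsilon-1/2}$. Combined with $t\ge 1/E$ this yields
\[
k\cdot \mathrm{Im}(\Psi_1)\;\ge\;\tfrac{k\,c_0}{E}\,\bigl(\tfrac{D}{2C}\bigr)^{\!2}\,k^{2\varepsilon-1}\;=\;c_1\,k^{2\varepsilon},
\]
so that $\bigl|e^{ik\Psi_1}\bigr|\le e^{-c_1 k^{2\varepsilon}}$ uniformly on the integration domain. (For $\theta$ bounded away from $0$, $\mathrm{Im}(\psi)$ is even bounded below by a positive constant, which is amply sufficient; the quadratic bound is only needed in the small-$\theta$ regime.)

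By (\ref{eqn:asymp-exp-amplitues}) the amplitude satisfies $|\mathcal{A}_1|\le C\,k^{2\mathrm{d}}$ on $t,u\in[1/E,E]$, and the integration domain in $(t,u,\vartheta,z)$ has bounded measure; multiplying by the prefactor $k^2/(2\pi)$, the first summand is dominated in absolute value by $C'\,k^{2\mathrm{d}+2}\,e^{-c_1 k^{2\varepsilon}}=O(k^{-\infty})$, since super-polynomial decay beats any polynomial factor. The only non-routine point is the uniform quadratic lower bound on $\mathrm{Im}(\psi(x,z))$ over the full range $\theta\in(-\pi,\pi)$ and $\|\mathbf{v}\|\le\epsilon$; this is a standard property of the BS phase recorded in \cite{sz}, so no new calculation is required beyond quoting it.
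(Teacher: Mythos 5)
Your proof is correct and takes a slightly different route than the paper's. Both arguments rest on the same key estimate (Corollary~1.3 of \cite{bs}): on $V'_k$ one has $\mathrm{dist}_X(z,S^1\cdot x)\gtrsim k^{\varepsilon-1/2}$, hence $\Im\psi(x,z)\ge C_1\,k^{2\varepsilon-1}$. At this point the paper integrates by parts iteratively in $dt$, using $\bigl|\partial_t\Psi_1\bigr|=\bigl|\psi(x,z)\bigr|\ge\Im\psi(x,z)\gtrsim k^{2\varepsilon-1}$, while you instead bound the integrand pointwise by $\bigl|e^{ik\Psi_1}\bigr|=e^{-k\Im\Psi_1}\le e^{-kt\,\Im\psi(x,z)}\le e^{-c_1 k^{2\varepsilon}}$ (using $t\ge 1/E$ and $\Im\psi\ge 0$ for the second term of the phase) and then multiply by the obvious polynomial amplitude bound $|\mathcal{A}_1|\lesssim k^{2\mathrm{d}}$. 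Since $\varepsilon>0$, super-polynomial decay wins. Your route is arguably the more elementary one: because the phase derivative in $t$ has a large \emph{imaginary} part here, the decay can be read off directly without passing to integration by parts; the latter is the more flexible tool (it would also handle a large \emph{real} $\partial_t\Psi_1$), but it is not needed in this situation. One small remark: your stated pointwise lower bound $\Im\psi(x,z)\ge c_0(1-\cos\theta+\|\mathbf{v}\|^2)$ requires a moment of care when $\cos\theta<0$, where the $\|\mathbf{v}\|^2$ coefficient in the Heisenberg expansion of $\Im\psi$ changes sign; as you note, in that regime $1-\cos\theta$ is bounded below and dominates, so the conclusion $\Im\psi(x,z)\ge c_0\|\mathbf{v}\|^2$ (for $\|\mathbf{v}\|$ small) still holds. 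In any case, simply quoting the distance bound from \cite{bs} as the paper does avoids having to keep track of the $\theta$-dependence of the Heisenberg expansion.
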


\begin{proof}
For $z\in V'_k$, given (\ref{eqn:def-of-phase}) and by Corollary 1.3 of \cite{bs} we have
$$
\big |\partial _t\Psi_1\big|=\big|\psi (x,z)\big|\ge \Im \psi (x,z)\ge C_1\,k^{2\varepsilon-1},
$$
where $C_1>0$ is an appropriate constant. The statement follows by iteratively integrating by parts in $dt$.
\end{proof}

Thus we are reduced to considering the second summand.

\begin{lem}
\label{lem:reduction-k-1}
Given that
$\mathrm{dist}_X\left(y,S^1\cdot x_\tau\right)>D\,k^{\varepsilon-1/2}$, the second summand on the right hand side of (\ref{eqn:composizione-integrata-partizionata-k}) is also $O\left(k^{-\infty}\right)$.
\end{lem}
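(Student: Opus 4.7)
The plan is to mimic the proof of Lemma \ref{lem:reduction-k}, performing iterated integration by parts in $u$ rather than $t$. From (\ref{eqn:def-of-phase}),
$$\partial_u \Psi_1 = \psi\left(r_\vartheta(z_\tau), y\right),$$
and by Corollary 1.3 of \cite{bs} the imaginary part of the complex phase controls the square of the $S^1$-invariant distance, so
$$|\partial_u \Psi_1| \ge \Im \psi\left(r_\vartheta(z_\tau), y\right) \ge C_2\cdot \mathrm{dist}_X\left(r_\vartheta(z_\tau), S^1\cdot y\right)^2 = C_2\cdot \mathrm{dist}_X\left(z_\tau, S^1\cdot y\right)^2,$$
the last equality by $S^1$-invariance of the metric; in particular the bound is independent of $\vartheta$. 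It therefore suffices to bound $\mathrm{dist}_X(z_\tau, S^1\cdot y)$ below uniformly on the support of $\gamma_k(z)$.

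This key distance estimate follows from an orbit triangle inequality. On $\mathrm{supp}(\gamma_k)\subseteq W'_k$ we have $\mathrm{dist}_X(z, S^1\cdot x) < (2D/(3C))\,k^{\varepsilon-1/2}$, so by the Lipschitz bound (\ref{eqn:boundf-on-distance}) together with the $S^1$-equivariance of $\phi^X_{-\tau}$,
$$\mathrm{dist}_X\left(z_\tau, S^1\cdot x_\tau\right)\le C\cdot \mathrm{dist}_X\left(z, S^1\cdot x\right) < \tfrac{2D}{3}\,k^{\varepsilon-1/2}.$$
Combining this with the hypothesis $\mathrm{dist}_X(y, S^1\cdot x_\tau) > D\,k^{\varepsilon-1/2}$,
$$\mathrm{dist}_X\left(z_\tau, S^1\cdot y\right)\ge \mathrm{dist}_X\left(y, S^1\cdot x_\tau\right)-\mathrm{dist}_X\left(z_\tau, S^1\cdot x_\tau\right) > \tfrac{D}{3}\,k^{\varepsilon-1/2}.$$
Hence $|\partial_u \Psi_1| \ge C_3\,k^{2\varepsilon-1}$ uniformly on the relevant locus.

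Iterated integration by parts in $u$ can then be carried through as in Lemma \ref{lem:reduction-k}. A pleasant simplification is that $\Psi_1$ is linear in $u$, so $\partial_u\Psi_1$ is itself $u$-independent and $1/\partial_u\Psi_1$ survives under further differentiation without generating higher-derivative terms. Each integration by parts thus contributes a factor $k^{-1}/\partial_u\Psi_1 = O(k^{-2\varepsilon})$, while $\partial_u$ applied to the rescaled semiclassical amplitude $s(ku,\cdot)$ preserves its order in $k$ (it merely lowers the power of $ku$ by one). After $N$ iterations the second summand of (\ref{eqn:composizione-integrata-partizionata-k}) is bounded in absolute value by a constant multiple of $k^{\mathrm{d}+2-2N\varepsilon}$, which is $O(k^{-\infty})$ upon taking $N$ arbitrarily large. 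The main technical point is verifying uniformity of the lower bound on $|\partial_u \Psi_1|$ across the remaining variables $(\theta,\vartheta,t)$; but the orbit-distance estimate above does not depend on $\vartheta$ by construction, the integration domain in $(t,u)$ and $(\theta,\vartheta)$ is compact, and all other dependences are smooth, so this is bookkeeping rather than a genuine obstacle.
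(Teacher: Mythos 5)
Your proof is correct and follows essentially the same route as the paper's: the same $(D/3)\,k^{\varepsilon-1/2}$ triangle-inequality estimate, then Corollary 1.3 of \cite{bs} to get $\big|\partial_u\Psi_1\big|\gtrsim k^{2\varepsilon-1}$, then iterated integration by parts in $du$. The only cosmetic difference is that you phrase the distance bound via the $S^1$-orbit distance on $X$ (noting its $\vartheta$-invariance), whereas the paper projects to $M$ and works with $\mathrm{dist}_M$ directly; these agree since the orbit distance descends to the base metric, and your extra remarks on the linearity of $\Psi_1$ in $u$ are a harmless elaboration of what the paper dispatches by reference to the preceding lemma.
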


\begin{proof}
Setting $n=:\pi(y)$ and $m=:\pi(x)$,
this may be rewritten
$\mathrm{dist}_M(n,m_\tau)>D\,k^{\varepsilon-1/2}$, where $m_\tau=:\phi^M_{-\tau}(m)$
and $\mathrm{dist}_M$ is the Riemannian distance on $M$.
Similarly, if we set $p=:\pi(z)$ and $p_\tau=:\phi^M_{-\tau}(p)$ then for $z\in W'_k$ we have
$\mathrm{dist}_M(m,p)<(2D/3C)\cdot k^{\varepsilon -1/2}$, and therefore
$\mathrm{dist}_M(m_\tau,p_\tau)<(2D/3)\cdot k^{\varepsilon -1/2}$.

Therefore, for every $z\in W'_k$ and $\vartheta\in (-\pi,\pi)$,
we have
\begin{eqnarray*}
\lefteqn{\mathrm{dist}_X\big(r_\vartheta(z_\tau),y\big)\ge\mathrm{dist}_M\big(p_\tau,n\big)}\\
&\ge&\mathrm{dist}_M\big(n,m_\tau\big)-\mathrm{dist}_M\big(m_\tau,p_\tau\big)>D\,k^{\varepsilon -1/2}
-\frac{2D}{3}\,k^{\varepsilon -1/2}
=\frac D3\,k^{\varepsilon -1/2}.
\end{eqnarray*}

We can now argue as in the proof of Lemma \ref{lem:reduction-k}, and conclude that
$$
\big |\partial _u\Psi_1\big|=\big|\psi \big(r_\vartheta(z_\tau),y\big)\big|\ge \Im \psi \big(r_\vartheta(z_\tau),y\big)\ge C_2\,k^{2\varepsilon-1}.
$$
Using this time using integration by parts in $du$,
we conclude that the second summand on the right hand side of (\ref{eqn:composizione-integrata-partizionata-k})
is also  $O\left(k^{-\infty}\right)$ as $k\rightarrow +\infty$, if
$\mathrm{dist}_X\left(y,S^1\cdot x_\tau\right)>D\,k^{\varepsilon -1/2}$.
\end{proof}

Hence the left hand side of (\ref{eqn:composizione-integrata-partizionata-k})
is $O\left(k^{-\infty}\right)$ for $k\rightarrow +\infty$,
uniformly for $\mathrm{dist}_X\left(y,S^1\cdot x_\tau\right)>D\,k^{\varepsilon-1/2}$.
This completes the proof of Theorem \ref{thm:rapid-decay}.

\section{Proof of Theorem \ref{thm:concentration-rate}.}

Let us set $\varepsilon=1/9$ in the previous construction (this is just to fix ideas).
In view of (\ref{eqn:composizione-integrata-partizionata-k}) and Lemma \ref{lem:reduction-k-1},
writing $z=x+\left(\theta,\mathbf{v}\right)$ in Heisenberg local coordinates we have
\begin{eqnarray}
\label{eqn:composizione-integrata-partizionata-k-1}
\lefteqn{U_{\tau,k}\left(x+\frac{\mathbf{u}}{\sqrt{k}},x_\tau+\frac{\mathbf{w}}{\sqrt{k}}\right)\sim}\\
&&\frac{k^2}{2\pi}\,
\int_{1/E}^{E}\int_{1/E}^{E}\int_{-\pi}^\pi\int_{-\pi}^\pi\int_{\mathbb{C}^\mathrm{d}}e^{i\,k\Psi_1'}\,\gamma_k(z)\,
\mathcal{A}_1\,\mathcal{V}(\theta,\mathbf{v})\,dt\,du\,d\vartheta\,d\theta\,d\mathbf{v},\nonumber
\end{eqnarray}
where, recalling  (\ref{eqn:def-of-phase}),
\begin{eqnarray}
\label{eqn:psi-1}
\Psi_1'&=:&t\,\psi\left(x+\frac{\mathbf{u}}{\sqrt{k}},x+\left(\theta,\mathbf{v}\right)\right)\\
&&+u\,\psi\left(\phi^X_{-\tau}\big(x+(\vartheta+\theta,\mathbf{v}\big)\big),x_\tau+\frac{\mathbf{w}}{\sqrt{k}}\right)
-\vartheta\nonumber
\end{eqnarray}
and integration in $d\mathbf{v}$ is over a ball centered at the origin and radius $O(\epsilon)$.

In particular, since Heisenberg local coordinates are isometric at the origin,
again by Corollary 1.3 of \cite{bs} for sufficiently small $\epsilon$ we have
$$
\big|\partial _t\Psi_1\big|=\left|\psi\left(x+\frac{\mathbf{u}}{\sqrt{k}},x+\left(\theta,\mathbf{v}\right)\right)\right|
\ge a\,|\theta|^2,
$$
for some constant $a>0$. Integrating by parts in $dt$, as in Lemma \ref{lem:reduction-k}, we conclude that only
a small neighborhood of the origin in $(-\pi,\pi)$, say $(-\epsilon/2,\epsilon/2)$, gives a non-negligible contribution to the asymptotics.

Furthermore, by Lemma 3.2 of \cite{pao-ltfII} we have
\begin{eqnarray}
\label{eqn:azione-in-coordinate}
\phi^X_{-\tau}\left(x+\left(\vartheta+\theta,\mathbf{v}\right)\right)=
x_\tau+\Big(\vartheta+\theta+R_3\left(\mathbf{v}\right),A\mathbf{v}+
R_2\left(\mathbf{v}\right)\Big),
\end{eqnarray}
where $R_j$ denotes a generic smooth function on an Euclidean space vanishing to $j$-th order
at the origin (that is, $R_j(\mathbf{s})=O\left(\|\mathbf{s}\|^j\right)$ for $\mathbf{s}\sim \mathbf{0}$).
Therefore, if $\epsilon$ is small, $|\theta|<\epsilon/2$ and $|\vartheta|\ge \epsilon$ then
\begin{eqnarray*}
\big|\partial _u\Psi_1\big|&=&
\left|\psi\left(\phi^X_{-\tau}\big(x+(\vartheta+\theta,\mathbf{v}\big)\big),x_\tau+\frac{\mathbf{w}}{\sqrt{k}}\right)\right|\\
&\ge& a\,\big(\vartheta+\theta+R_3\left(\mathbf{v}\right)\big)^2\ge a'\,\epsilon^2,
\end{eqnarray*}
for some constant $a'>0$. Thus integration by parts in $du$ implies that the contribution to the asymptotics
from the locus where $|\theta|<\epsilon/2$ and $|\vartheta|<\epsilon$ is also $O\left(k^{-\infty}\right)$.

We can thus write
\begin{eqnarray}
\label{eqn:composizione-integrata-partizionata-k-2}
\lefteqn{U_{\tau,k}\left(x+\frac{\mathbf{u}}{\sqrt{k}},x_\tau+\frac{\mathbf{w}}{\sqrt{k}}\right)\sim}\\
&&\frac{k^2}{2\pi}\,
\int_{1/E}^{E}\int_{1/E}^{E}\int_{-\epsilon}^\epsilon\int_{-\epsilon}^\epsilon\int_{\mathbb{C}^\mathrm{d}}e^{i\,k\Psi_1'}\,\gamma_k(z)\,
\,\mathcal{A}_1'\,\mathcal{V}(\theta,\mathbf{v})\,dt\,du\,d\vartheta\,d\theta\,d\mathbf{v},\nonumber
\end{eqnarray}
where $\mathcal{A}_1'=:\varrho(\theta,\vartheta)\,\mathcal{A}_1$, and $\varrho(\theta,\vartheta)$
is an appropriate bump function on
$\mathbb{R}^2$, $\equiv 1$ near the origin and supported on a ball of radius $O(\epsilon)$.

Let us now operate the rescaling $\mathbf{v}\mapsto \mathbf{v}/\sqrt{k}$; we get
\begin{eqnarray}
\label{eqn:composizione-integrata-riscalata}
\lefteqn{U_{\tau,k}\left(x+\frac{\mathbf{u}}{\sqrt{k}},x_\tau+\frac{\mathbf{w}}{\sqrt{k}}\right)}\\
&\sim&\frac{k^{2-\mathrm{d}}}{2\pi}\,\int_{\mathbb{C}^\mathrm{d}}\left[
\int_{1/E}^{E}\int_{1/E}^{E}\int_{-\epsilon}^\epsilon\int_{-\epsilon}^\epsilon
e^{i\,k\Psi_{2}}\,\mathcal{A}_2\cdot
\mathcal{V}\left(\theta,\frac{\mathbf{v}}{\sqrt{k}}\right)\,dt\,du\,d\vartheta\,d\theta\right]\,d\mathbf{v}.
\nonumber
\end{eqnarray}
where $\Psi_2=\Psi_1'(\mathbf{u},\mathbf{w},\mathbf{v},\theta)$ is $\Psi_1'$ expressed in rescaled Heisenberg
coordinates, and similarly for
$\mathcal{A}_2=:\gamma\left(k^{-1/9}\mathbf{v}\right)\,\mathcal{A}_1'$ (dependence on $\tau$ and $k$ is omitted).
Integration in $d\mathbf{v}$ is now over a ball centered at the origin and of radius $O\left(k^{1/9}\right)$ in $\mathbb{C}^\mathrm{d}$.

Thus, by (\ref{eqn:psi-1}),
\begin{eqnarray}
\label{eqn:psi-2}
\lefteqn{\Psi_2=t\,\psi\left(x+\frac{\mathbf{u}}{\sqrt{k}},x+\left(\theta,\frac{\mathbf{v}}{\sqrt{k}}\right)\right)}\\
&&+u\,\psi\left(\phi^X_{-\tau}\left(x+\left(\vartheta+\theta,\frac{\mathbf{v}}{\sqrt{k}}\right)\right),
x_{\tau}+\frac{\mathbf{w}}{\sqrt{k}}\right)-\vartheta.\nonumber
\end{eqnarray}

Let $R_j$ denote a generic smooth function on an Euclidean space vanishing to $j$-th order
at the origin (that is, $R_j(\mathbf{s})=O\left(\|\mathbf{s}\|^j\right)$ for $\mathbf{s}\sim \mathbf{0}$).
By the discussion in \S 3 of \cite{sz}, we have
\begin{eqnarray}
\label{eqn:expansion-first-summand}
\lefteqn{t\,\psi\left(x+\frac{\mathbf{u}}{\sqrt{k}},x+\left(\theta,\frac{\mathbf{v}}{\sqrt{k}}\right)\right)}\\
&=&it\,\left[1-e^{-i\theta}\right]-\frac{it}{k}\,\psi_2(\mathbf{u},\mathbf{v})\,e^{-i\theta}+
t\,R_3\left(\frac{\mathbf{u}}{\sqrt{k}},\frac{\mathbf{v}}{\sqrt{k}}\right)\,e^{-i\theta},\nonumber
\end{eqnarray}
where
\begin{equation}
\label{eqn:def-psi2}
\psi_2(\mathbf{u},\mathbf{v})=:-i\,\omega_0(\mathbf{u},\mathbf{v})-\frac 12\,\|\mathbf{u}-\mathbf{v}\|^2.
\end{equation}

Again by Lemma 3.2 of \cite{pao-ltfII}, we have
\begin{eqnarray}
\label{eqn:azione-in-coordinate}
\lefteqn{\phi^X_{-\tau}\left(x+\left(\vartheta+\theta,\frac{\mathbf{v}}{\sqrt{k}}\right)\right)}\\
&=&x_\tau+\left(\vartheta+\theta+R_3\left(\frac{\mathbf{v}}{\sqrt{k}}\right),\frac{A\mathbf{v}}{\sqrt{k}}+
R_2\left(\frac{\mathbf{v}}{\sqrt{k}}\right)\right).\nonumber
\end{eqnarray}
It follows that
\begin{eqnarray}
\label{eqn:expansion-second-summand}
\lefteqn{u\,\psi\left(\phi^X_{-\tau}\left(x+\left(\vartheta+\theta,\frac{\mathbf{v}}{\sqrt{k}}\right)\right),
x+\frac{\mathbf{w}}{\sqrt{k}}\right)}\\
&=&iu\,\left[1-e^{i(\theta+\vartheta)}\right]-\frac{iu}{k}\,\psi_2(A\mathbf{v},\mathbf{w})\,e^{i(\theta+\vartheta)}+
u\,R_3\left(\frac{A\mathbf{v}}{\sqrt{k}},\frac{\mathbf{w}}{\sqrt{k}}\right)\,e^{i(\theta+\vartheta)},\nonumber
\end{eqnarray}
Inserting (\ref{eqn:expansion-first-summand}) and (\ref{eqn:expansion-second-summand}) in
(\ref{eqn:psi-2}), we can rewrite (\ref{eqn:composizione-integrata-riscalata}) as follows:
\begin{eqnarray}
\label{eqn:composizione-integrata-riscalata-espansa}
\lefteqn{U_{\tau,k}\left(x+\frac{\mathbf{u}}{\sqrt{k}},x_\tau+\frac{\mathbf{w}}{\sqrt{k}}\right)}\\
&\sim&\frac{k^{2-\mathrm{d}}}{2\pi}\,\int_{\mathbb{C}^\mathrm{d}}\left[
\int_{1/E}^{E}\int_{1/E}^{E}\int_{-\epsilon}^\epsilon\int_{-\epsilon}^\epsilon e^{i\,k\Psi}\,\mathcal{B}_k\cdot
\mathcal{V}\left(\theta,\frac{\mathbf{v}}{\sqrt{k}}\right)\,dt\,du\,d\vartheta\,d\theta\right]\,d\mathbf{v},
\nonumber
\end{eqnarray}
where
\begin{equation}
\label{eqn:def-di-Psi}
\Psi=:it\,\left[1-e^{-i\theta}\right]+iu\,\left[1-e^{i(\theta+\vartheta)}\right]-\vartheta,
\end{equation}
\begin{eqnarray}
\label{eqn:defn-ampiezza}
\mathcal{B}_k&=:&\exp\left(t\,\psi_2\big(\mathbf{u},\mathbf{v}\big)\,e^{-i\theta}
+u\,\psi_2\big(A\mathbf{v},\mathbf{w}\big)\,e^{i(\theta+\vartheta)}\right)\\
&&\exp\left(ik\,t\,R_3\left(\frac{\mathbf{u}}{\sqrt{k}},\frac{\mathbf{v}}{\sqrt{k}}\right)\,e^{-i\theta}+
ik\,u\,R_3\left(\frac{A\mathbf{v}}{\sqrt{k}},\frac{\mathbf{w}}{\sqrt{k}}\right)\,e^{i(\theta+\vartheta)}\right)\cdot
\mathcal{A}_2.\nonumber
\end{eqnarray}

\begin{lem}
\label{lem:real-negative-part}
There exists $a=a_\tau>0$ such that for any $(\mathbf{u},\mathbf{w})\in N_{\tau,m}$ and
$\mathbf{v}\in T_mM$ we have
$$
\Re\left(t\,\psi_2\big(\mathbf{u},\mathbf{v}\big)\,e^{-i\theta}
+u\,\psi_2\big(A\mathbf{v},\mathbf{w}\big)\,e^{i(\theta+\vartheta)}\right)\le
-a\,\left(\|\mathbf{u}\|^2+\|\mathbf{w}\|^2+\|\mathbf{v}\|^2\right),
$$
\end{lem}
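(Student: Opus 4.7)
The plan is to reduce the inequality to a positive-definiteness statement for a quadratic form at $(\theta,\vartheta)=(0,0)$, then extend by compactness and perturbation.

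First, I would expand the left hand side explicitly. Using $\psi_2(\mathbf{a},\mathbf{b})=-i\omega_0(\mathbf{a},\mathbf{b})-\tfrac12\|\mathbf{a}-\mathbf{b}\|^2$ and separating real and imaginary parts of the phase factors, one obtains
\begin{eqnarray*}
\lefteqn{\Re\!\left(t\,\psi_2(\mathbf{u},\mathbf{v})\,e^{-i\theta}+u\,\psi_2(A\mathbf{v},\mathbf{w})\,e^{i(\theta+\vartheta)}\right)}\\
&=& -\tfrac{t}{2}\cos\theta\,\|\mathbf{u}-\mathbf{v}\|^2-\tfrac{u}{2}\cos(\theta+\vartheta)\,\|A\mathbf{v}-\mathbf{w}\|^2\\
&& -\,t\sin\theta\,\omega_0(\mathbf{u},\mathbf{v})+u\sin(\theta+\vartheta)\,\omega_0(A\mathbf{v},\mathbf{w}).
\end{eqnarray*}
At $\theta=\vartheta=0$ only the first two (non-positive) terms remain.

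Second, and this is the core step, I would prove that the quadratic form $Q_0(\mathbf{u},\mathbf{v},\mathbf{w})=\|\mathbf{u}-\mathbf{v}\|^2+\|A\mathbf{v}-\mathbf{w}\|^2$ is positive definite on the subspace $N_{\tau,m}\oplus T_mM$. If $Q_0=0$ then $\mathbf{u}=\mathbf{v}$ and $\mathbf{w}=A\mathbf{v}=A\mathbf{u}$, so $(\mathbf{u},\mathbf{w})\in\mathrm{graph}(A)=T_{\tau,m}$; since by hypothesis $(\mathbf{u},\mathbf{w})\in N_{\tau,m}=T_{\tau,m}^\perp$, we get $(\mathbf{u},\mathbf{w})=0$ and hence $\mathbf{v}=0$. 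By homogeneity and compactness of the unit sphere in $N_{\tau,m}\oplus T_mM$, there exists $c_\tau>0$ with
\[
Q_0(\mathbf{u},\mathbf{v},\mathbf{w})\ \ge\ c_\tau\,\bigl(\|\mathbf{u}\|^2+\|\mathbf{v}\|^2+\|\mathbf{w}\|^2\bigr)\quad\text{on }N_{\tau,m}\oplus T_mM.
\]
This is the transversality input; it encodes geometrically that $N_{\tau,m}$ meets the graph of $A$ only at the origin.

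Third, I would upgrade to the bound claimed in the lemma by compactness in $(t,u)\in[1/E,E]^2$ and by perturbation in $(\theta,\vartheta)$. For $|\theta|,|\vartheta|\le\epsilon$ with $\epsilon$ small, $\cos\theta$ and $\cos(\theta+\vartheta)$ are both $\ge 1/2$, so the first two terms contribute at most
\[
-\tfrac{1}{4E}\,Q_0(\mathbf{u},\mathbf{v},\mathbf{w})\ \le\ -\tfrac{c_\tau}{4E}\bigl(\|\mathbf{u}\|^2+\|\mathbf{v}\|^2+\|\mathbf{w}\|^2\bigr).
\]
The two sine terms are error terms: using $|\omega_0(\mathbf{a},\mathbf{b})|\le\tfrac12(\|\mathbf{a}\|^2+\|\mathbf{b}\|^2)$ and $|\sin\theta|\le\epsilon$, they are bounded in absolute value by $C_{A,E}\,\epsilon\bigl(\|\mathbf{u}\|^2+\|\mathbf{v}\|^2+\|\mathbf{w}\|^2\bigr)$. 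Choosing $\epsilon$ small enough that $C_{A,E}\,\epsilon<c_\tau/(8E)$ yields the claim with $a=a_\tau:=c_\tau/(8E)$.

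The only genuinely non-routine step is the positive-definiteness of $Q_0$ on $N_{\tau,m}\oplus T_mM$; this is where the normality hypothesis $(\mathbf{u},\mathbf{w})\in N_{\tau,m}$ is used in an essential way. Everything else is a compactness-plus-continuity sharpening, which is why $\epsilon$ has to be chosen small and why the lemma is stated uniformly only in this restricted range of $(\theta,\vartheta)$.
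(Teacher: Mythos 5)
Your proof is correct and follows the same route as the paper's: both arguments hinge on showing that the linear map $(\mathbf{u},\mathbf{w},\mathbf{v})\mapsto(\mathbf{u}-\mathbf{v},A\mathbf{v}-\mathbf{w})$ is injective on $N_{\tau,m}\times T_mM$ (equivalently, that $Q_0$ is positive definite there), deduce the coercivity constant $c_\tau$ by compactness, and then absorb the small-angle perturbations coming from $|\theta|,|\vartheta|<\epsilon$. The paper states the injectivity step tersely (``injective by assumption''), whereas you spell out the short argument that $Q_0=0$ forces $(\mathbf{u},\mathbf{w})\in T_{\tau,m}\cap N_{\tau,m}=\{0\}$, and you also carry out the trigonometric expansion of the real part explicitly; both are faithful elaborations of what the paper compresses, with no divergence in strategy.
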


\begin{proof}
The linear map $N_{\tau,m}\times T_mM\rightarrow T_mM\times T_{m_\tau}M$ given in local coordinates
by $(\mathbf{u},\mathbf{w},\mathbf{v})\mapsto (\mathbf{u}-\mathbf{v},A\mathbf{v}-\mathbf{w})$ is injective
by assumption. Therefore, $\|\mathbf{u}-\mathbf{v}\|^2+\|A\mathbf{v}-\mathbf{w}\|^2\ge
C\left(\|\mathbf{u}\|^2+\|\mathbf{w}\|^2+\|\mathbf{v}\|^2\right)$ for some $C>0$. The statement
follows from the definition of $\psi_2$ and the fact that $|\theta|,|\vartheta|<\epsilon$.ù
\end{proof}

The second exponent on the right hand side of (\ref{eqn:defn-ampiezza}), on the other hand, is bounded
for $\|\mathbf{u}\|,\,\|\mathbf{w}\|,\,\|\mathbf{v}\|=O\left(k^{1/9}\right)$. Taylor expanding the exponent
at the origin yields an asymptotic expansion of the corresponding exponential in descending powers of
$k^{-1/2}$, which may be incorporated into the amplitude.

We are then in a position to apply the stationary phase Lemma, regarding the inner integral in
(\ref{eqn:composizione-integrata-riscalata-espansa}) as an oscillatory integral, with
phase $\Psi=\Psi (t,\theta,u,\vartheta)$ having non-negative imaginary part. A straightforward computation yields:

\begin{lem}
$\Psi$ has the unique stationary point
$$
(t_0,\theta_0,u_0,\vartheta_0)=(1,0,1,0).
$$
Furthermore, the Hessian matrix there is
$$
H(\Psi)_0=\left(
            \begin{array}{cccc}
              0 & -1 & 0 & 0 \\
              -1 & 2i & 1 & i \\
              0 & 1 & 0 & 1 \\
              0 & i & 1 & i \\
            \end{array}
          \right).
          $$
          In particular, $\det \big(H(\Psi)_0\big)=1$ and the stationary point is non-degenerate.
\end{lem}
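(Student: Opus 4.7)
The proof is a direct computation, so the plan is simply to organize that computation cleanly and verify both claims (critical point location and Hessian determinant).

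The plan is to first compute the four partial derivatives of $\Psi$ explicitly. Differentiating \eqref{eqn:def-di-Psi} I would get $\partial_t\Psi=i\bigl(1-e^{-i\theta}\bigr)$, $\partial_u\Psi=i\bigl(1-e^{i(\theta+\vartheta)}\bigr)$, $\partial_\theta\Psi=-t\,e^{-i\theta}+u\,e^{i(\theta+\vartheta)}$, and $\partial_\vartheta\Psi=u\,e^{i(\theta+\vartheta)}-1$. The equations $\partial_t\Psi=\partial_u\Psi=0$ force $e^{-i\theta}=1$ and $e^{i(\theta+\vartheta)}=1$, and since $|\theta|,|\vartheta|<\epsilon\ll 1$ the only solution in the range of interest is $\theta=\vartheta=0$. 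Plugging this into the remaining equations gives $-t+u=0$ and $u-1=0$, hence $(t_0,\theta_0,u_0,\vartheta_0)=(1,0,1,0)$.

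Next I would compute the ten second partial derivatives. Since $\partial_t\Psi$ and $\partial_u\Psi$ each depend only on one of the angles, the pure second derivatives in $t$ and $u$ and the mixed derivative $\partial_t\partial_u\Psi$ all vanish, yielding the zeros in the first and third rows and columns. The remaining entries come from differentiating the cleanly separable expressions: $\partial_\theta\partial_t\Psi=-e^{-i\theta}$, $\partial_u\partial_\theta\Psi=e^{i(\theta+\vartheta)}=\partial_u\partial_\vartheta\Psi$, $\partial_\vartheta\partial_\theta\Psi=iu\,e^{i(\theta+\vartheta)}$, and the two diagonal entries from the second row and fourth row come from $\partial_\theta^2\Psi=it\,e^{-i\theta}+iu\,e^{i(\theta+\vartheta)}$ and $\partial_\vartheta^2\Psi=iu\,e^{i(\theta+\vartheta)}$. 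Evaluating all of these at $(1,0,1,0)$ reproduces the stated matrix $H(\Psi)_0$.

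Finally, for the determinant I would expand along the first column, which has only the entry $-1$ in row two nonzero, producing $\det H(\Psi)_0=-(-1)\det\bigl(\begin{smallmatrix}-1&0&0\\1&0&1\\i&1&i\end{smallmatrix}\bigr)$; a further expansion of the $3\times 3$ minor along its first row collapses it to $-\det\bigl(\begin{smallmatrix}0&1\\1&i\end{smallmatrix}\bigr)=1$, so the full determinant equals $1$. Non-degeneracy is immediate from $\det\neq 0$.

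There is no real obstacle here; the only thing to watch is ensuring uniqueness of the critical point within the domain of integration (where $|\theta|,|\vartheta|<\epsilon$ and $t,u\in(1/E,E)$), which is why the preliminary integration-by-parts reductions leading to \eqref{eqn:composizione-integrata-riscalata-espansa} matter: they confine us to a neighborhood where the transcendental equations $e^{-i\theta}=e^{i(\theta+\vartheta)}=1$ have only the solution $\theta=\vartheta=0$.
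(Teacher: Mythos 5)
Your computation is correct and is precisely the direct verification the paper elides with the phrase ``a straightforward computation yields.'' The partial derivatives, the solution of the stationarity system within the cut-off neighborhood $|\theta|,|\vartheta|<\epsilon$, the second-order partials evaluated at $(1,0,1,0)$, and the cofactor expansion giving $\det H(\Psi)_0=1$ all check out, and your closing remark correctly identifies why the uniqueness claim is local (resting on the earlier integration-by-parts reductions).
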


In addition, $H(\Psi)_0=H(1)$, where for $0\le s\le 1$ we set
$$
H(s)=:\left(
            \begin{array}{cccc}
              0 & -1 & 0 & 0 \\
              -1 & 2s i & 1 & s i \\
              0 & 1 & 0 & 1 \\
              0 & s i & 1 & s i \\
            \end{array}
          \right).
          $$
          We have $H(s)=1$ for every $s$, and $H(0)$ is real and symmetric with vanishing
signature. Therefore,
\begin{equation}
\label{eqn:sqrt-factor}
\sqrt{\det \left(\frac{k\,H(\Psi)_0}{2\pi i}\right)}=\left(\frac{k}{2\pi}\right)^2.
\end{equation}
Applying the stationary phase Lemma, we get for the inner integral in (\ref{eqn:composizione-integrata-riscalata-espansa})
an asymptotic expansion in descending powers of $k^{-1/2}$, and it follows from
Lemma \ref{lem:real-negative-part} and the bound on the $N$-th step remainder
that this expansion may be integrated term by term in
$d\mathbf{v}$, yielding an asymptotic expansion for (\ref{eqn:composizione-integrata-riscalata-espansa}). Given (\ref{eqn:amplitudes-leading-term})
and (\ref{eqn:sqrt-factor}),
and since
$\mathcal{V}(\theta,\mathbf{0})=1/(2\pi)$, the leading term is
\begin{equation}
\label{eqn:leading-term}
\frac{k^{\mathrm{d}}}{\pi^{2\mathrm{d}}}\,\varrho_\tau(m)\,
\int_{\mathbb{C}^\mathrm{d}}e^{\psi_2(\mathbf{u},\mathbf{v})+\psi_2(A\mathbf{v},\mathbf{w})}\,d\mathbf{v}.
\end{equation}
With the change of variable $\mathbf{v}=\mathbf{r}+\mathbf{u}$, we get
\begin{eqnarray}
\label{eqn:1st-manipulation}
\lefteqn{\psi_2(\mathbf{u},\mathbf{v})+\psi_2(A\mathbf{v},\mathbf{w})}\\
&=&\psi_2(A\mathbf{u},\mathbf{w})-i\,\omega_0\left(A^{-1}L(\mathbf{u},\mathbf{w}),\mathbf{r}\right)
-\mathbf{r}^t\,A^t\,L(\mathbf{u},\mathbf{w})-\frac 12\,\mathbf{r}^t\,Q\,\mathbf{r},\nonumber
\end{eqnarray}
where $L=L_A$ and $Q=Q_A$ are as in Definition \ref{defn:key-bilinear-form}. With the further replacement
$\mathbf{r}=\mathbf{s}-Q^{-1} A L(\mathbf{u},\mathbf{w})$ in (\ref{eqn:1st-manipulation}), we get
\begin{eqnarray}
\label{eqn:2nd-manipulation}
\psi_2(\mathbf{u},\mathbf{v})+\psi_2(A\mathbf{v},\mathbf{w})
&=&\Gamma(\mathbf{u},\mathbf{w})\nonumber\\
 &&-i\,\mathbf{s}^t\,J_0\,A^{-1}\,L(\mathbf{u},\mathbf{w})-\frac 12\,\mathbf{s}^tQ\,\mathbf{s},
\end{eqnarray}
where
\begin{eqnarray}
\label{eqn:Gamma}
\Gamma(\mathbf{u},\mathbf{w}) &=:&\psi_2(A\mathbf{u},\mathbf{w})+i\,\omega_0\left(A^{-1}L(\mathbf{u},\mathbf{w}),Q^{-1}A^tL(\mathbf{u},\mathbf{w})\right)
\nonumber
\\
&&
+\frac 12\,L(\mathbf{u},\mathbf{w})^t\,A\,Q^{-1}\,A^tL(\mathbf{u},\mathbf{w}).
\end{eqnarray}
Therefore, the leading term (\ref{eqn:leading-term}) is
\begin{equation}
\label{eqn:leading-term-1}
\frac {k^{\mathrm{d}}}{\pi^{2\mathrm{d}}}\,\varrho_\tau(m)\,e^{\Gamma(\mathbf{u},\mathbf{w})}\,
\int_{\mathbb{C}^\mathrm{d}}e^{-i\,\mathbf{s}^t\,J_0\,A^{-1}\,L(\mathbf{u},\mathbf{w})-\frac 12\,\mathbf{s}^tQ\,\mathbf{s}}\,d\mathbf{s}.
\end{equation}
Let us set
\begin{equation}
\label{eqn:F-G}
F(\mathbf{u},\mathbf{w})=:-J_0A^{-1}L(\mathbf{u},\mathbf{w})=-A^tJ_0L(\mathbf{u},\mathbf{w}),\,\,\,\,
G(\mathbf{u},\mathbf{w})=:A^t\,L(\mathbf{u},\mathbf{w}).
\end{equation}
Then with some manipulations (\ref{eqn:leading-term-1}) is
\begin{eqnarray}
\label{eqn:leading-term-2}
\lefteqn{\left(\frac {k}{\pi}\right)^{\mathrm{d}}\,\varrho_\tau(m)\cdot
\frac{2^\mathrm{d}}{\sqrt{\det (Q)}}\cdot e^{\Gamma(\mathbf{u},\mathbf{w})-\frac 12\,F(\mathbf{u},\mathbf{w})^tQ^{-1}F(\mathbf{u},\mathbf{w})}}\nonumber\\
&=&\left(\frac {k}{\pi}\right)^{\mathrm{d}}\,\varrho_\tau(m)\cdot
\frac{2^\mathrm{d}}{\sqrt{\det (Q)}}\cdot e^{S(\mathbf{u},\mathbf{w})},
\end{eqnarray}
where
\begin{eqnarray}
\label{eqn:Scal}
S(\mathbf{u},\mathbf{w})&=:&\psi_2(A\mathbf{u},\mathbf{w})-i\,G(\mathbf{u},\mathbf{w})\,Q^{-1}F(\mathbf{u},\mathbf{w})\\
&&+\frac 12\,G(\mathbf{u},\mathbf{w})^t\,Q^{-1}\,G(\mathbf{u},\mathbf{w})-
\frac 12\,F(\mathbf{u},\mathbf{w})^tQ^{-1}\,F(\mathbf{u},\mathbf{w}).\nonumber
\end{eqnarray}

\begin{lem}
\label{lem:leading-term}
If $\mathcal{P}=\mathcal{P}_A$ and $\mathcal{R}=\mathcal{R}_A$ are as in Definition \ref{defn:key-bilinear-form},
then
$$
S(\mathbf{u},\mathbf{w})=-L(\mathbf{u},\mathbf{w})^t\,\left(\mathcal{P}+\frac i2\,\mathcal{R}\right)\,L(\mathbf{u},\mathbf{w})
-i\,\omega_0(A\mathbf{u},\mathbf{w}).
$$
\end{lem}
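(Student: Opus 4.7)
The plan is to verify the claimed formula by expanding $S(\mathbf{u},\mathbf{w})$ and matching it with the asserted expression term by term, after extracting the common summand $-i\,\omega_0(A\mathbf{u},\mathbf{w})$, which already appears in $\psi_2(A\mathbf{u},\mathbf{w}) = -i\,\omega_0(A\mathbf{u},\mathbf{w}) - \frac{1}{2}\|L\|^2$. Writing $L = L(\mathbf{u},\mathbf{w})$, $F = -A^t J_0 L$ and $G = A^t L$, the identity reduces to the two equalities
\[
G^t Q^{-1} F \;=\; \tfrac{1}{2}\, L^t \mathcal{R}_A L, \qquad -\tfrac{1}{2}\|L\|^2 + \tfrac{1}{2}\, G^t Q^{-1} G - \tfrac{1}{2}\, F^t Q^{-1} F \;=\; -\, L^t \mathcal{P}_A L,
\]
the first accounting for the imaginary cross term and the second for the real quadratic part.

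The central computational input is the factorization $A Q^{-1} A^t = I - \mathcal{P}_A$: starting from the polar decomposition $A = OP$ and using that $P$ commutes with $Q^{-1} = (I+P^2)^{-1}$, one computes $AQ^{-1}A^t = OP^2(I+P^2)^{-1}O^t = O(I - (I+P^2)^{-1})O^t = I - \mathcal{P}_A$. I will use alongside this the fact that $O$ commutes with $J_0$ (being symplectic and orthogonal), the symplectic identity $PJ_0 P = J_0$ --- equivalently $J_0 P^2 = P^{-2} J_0$, which translates into the algebraic rules $J_0 Q^{-1} = Q^{-1} P^2 J_0$ and $J_0 Q^{-1} J_0 = -Q^{-1} P^2$ --- and the elementary observation that $L^t M L$ depends only on the symmetric part of $M$, so in particular $L^t J_0 L = 0$.

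For the imaginary identity, $G^t Q^{-1} F = -L^t (A Q^{-1} A^t) J_0 L = -L^t (I - \mathcal{P}_A) J_0 L = L^t \mathcal{P}_A J_0 L$; symmetrizing $\mathcal{P}_A J_0 = O Q^{-1} J_0 O^t$ via $J_0 Q^{-1} = Q^{-1} P^2 J_0$ and $OJ_0 = J_0 O$ produces exactly $\frac{1}{2} O (I - P^2) Q^{-1} J_0 O^t = \frac{1}{2}\mathcal{R}_A$. For the real identity, the same factorization gives $G^t Q^{-1} G = \|L\|^2 - L^t \mathcal{P}_A L$, while $J_0 Q^{-1} J_0 = -Q^{-1} P^2$ gives $F^t Q^{-1} F = \|L\|^2 - L^t O Q^{-1} P^2 O^t L$; combining these with $-\frac{1}{2}\|L\|^2 = -\frac{1}{2}\, L^t O O^t L$ and invoking the algebraic simplification $Q + I - P^2 = 2I$ collapses everything to $-L^t O Q^{-1} O^t L = -L^t \mathcal{P}_A L$. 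The main obstacle I anticipate is the bookkeeping in the imaginary identity: the correction term $\mathcal{R}_A$ arises entirely from the failure of $J_0$ to commute with $P^2$, and extracting the symmetric part of $\mathcal{P}_A J_0$ in precisely the form required by Definition \ref{defn:key-bilinear-form} demands a careful sequencing of the commutation rules $OJ_0 = J_0 O$ and $J_0 P^2 = P^{-2} J_0$.
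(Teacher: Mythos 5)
Your proposal is correct and the verification checks out: the splitting into real and imaginary identities, the factorization $AQ^{-1}A^t = I - \mathcal{P}_A$, the commutation rules $J_0 Q^{-1} = Q^{-1}P^2 J_0$ and $J_0 Q^{-1}J_0 = -Q^{-1}P^2$, and the symmetrization $\mathcal{P}_A J_0 - J_0\mathcal{P}_A = \mathcal{R}_A$ together produce exactly the claimed formula. Your route is essentially the paper's, which also starts from (\ref{eqn:Scal}) and groups terms into a real quadratic form in $L$ and an imaginary cross term; the main difference is organizational: the paper cites Lemma 2.1 of \cite{pao-ltfII} for the matrix identity $I - J_0 AQ^{-1}A^t J_0 - AQ^{-1}A^t = 2\mathcal{P}_A$ and then symmetrizes $AQ^{-1}A^t J_0$ directly, whereas you pivot everything on the single clean factorization $AQ^{-1}A^t = I - \mathcal{P}_A$, which makes the argument fully self-contained and yields the paper's cited identity as a one-line corollary.
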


\begin{proof}
By (\ref{eqn:def-psi2}),
$\psi_2(A\mathbf{u},\mathbf{w})=-i\,\omega_0(A\mathbf{u},\mathbf{w})-(1/2)\,\|L(\mathbf{u},\mathbf{w})\|^2$.
Using this and (\ref{eqn:F-G}) in (\ref{eqn:Scal}), we get
\begin{eqnarray}\label{eqn:Scal-1}
S(\mathbf{u},\mathbf{w})&=&
-\frac 12\,L(\mathbf{u},\mathbf{w})^t\,\left[I-J_0AQ^{-1}A^tJ_0-AQ^{-1}A^t\right]\,L(\mathbf{u},\mathbf{w})\nonumber
\\
&&+i\,L(\mathbf{u},\mathbf{w})^t\,AQ^{-1}A^tJ_0L(\mathbf{u},\mathbf{w})-i\,\omega_0(A\mathbf{u},\mathbf{w}).
\end{eqnarray}
Writing $A=OP$, we get (see Lemma 2.1 of \cite{p-ltfII})
\begin{equation}
\label{eqn:relazione-matriciale}
I-J_0AQ^{-1}A^tJ_0-AQ^{-1}A^t=2\,OQ^{-1}O^t=2\,\mathcal{P},
\end{equation}
and on the other hand
$AQ^{-1}A^tJ_0=OP^2Q^{-1}J_0O^t$; on the other hand, since $P$ is symplectic and symmetric, $\left(I+P^{2}\right)\,J_0=J_0\,\left(I+P^{-2}\right)$.
Therefore,
\begin{eqnarray}
\label{eqn:relazione-matriciale1}
AQ^{-1}A^tJ_0+\left(AQ^{-1}A^tJ_0\right)^t
&=&O\left[P^2-I\right]Q^{-1}J_0 O^t\nonumber\\
&=&-\mathcal{R}.
\end{eqnarray}
The statement follows by inserting (\ref{eqn:relazione-matriciale}) and (\ref{eqn:relazione-matriciale1})
in (\ref{eqn:Scal-1}).
\end{proof}

Thus $S(\mathbf{u},\mathbf{w})=\mathcal{S}_A(\mathbf{u},\mathbf{w})$, and this proves that the leading term
of the asymptotic expansion  is as claimed in the statement of the Theorem.

By the same arguments, the general lower order
term in the expansion has the form
\begin{eqnarray}
\label{eqn:general-term-expansion-1}
\lefteqn{k^{\mathrm{d}-j/2}\,\varrho_\tau(m)\,
\int_{\mathbb{C}^\mathrm{d}}P_j(\mathbf{u},\mathbf{w},\mathbf{v})\,
e^{\psi_2(\mathbf{u},\mathbf{v})+\psi_2(A\mathbf{v},\mathbf{w})}\,d\mathbf{v}}\\
&=&k^{\mathrm{d}-j/2}\,\varrho_\tau(m)\,e^{\Gamma(\mathbf{u},\mathbf{w})}\,
\int_{\mathbb{C}^\mathrm{d}}e^{-i\,\mathbf{s}^t\,J_0\,A^{-1}\,L(\mathbf{u},\mathbf{w})}
\widetilde{P}_j(\mathbf{u},\mathbf{w},D)\left(e^{-\frac 12\,\mathbf{s}^tQ\,\mathbf{s}}\right)\,d\mathbf{s},
\nonumber
\end{eqnarray}
where $j$ is a positive integer, $P_j$ a polynomial, and $\widetilde{P}_j$
a differential operator with coefficients depending polynomially on $(\mathbf{u},\mathbf{w})$.
This may be rewritten
$$
k^{\mathrm{d}-j/2}\,\,\varrho_\tau(m)e^{\mathcal{S}_{\tau,m}(\mathbf{u},\mathbf{w})}\cdot
a_j(m,\tau,\mathbf{u},\mathbf{w}),
$$
for a certain polynomial $a_j$, depending smoothly on $m$ and $\tau$.

Let us now consider the last claim of the Theorem. Since on the one hand
the asymptotic expansions for the amplitudes
in (\ref{eqn:asymp-exp-amplitues}) go down by integer steps, and on the other the inner integral in
(\ref{eqn:composizione-integrata-riscalata-espansa}) is oscillatory in $k$, the appearance of
half-integer powers of $k$ is the asymptotic expansion of Theorem \ref{thm:concentration-rate}
originates solely from Taylor expanding the amplitude in (\ref{eqn:defn-ampiezza})
in the rescaled arguments $\mathbf{u}/\sqrt{k},\,\mathbf{w}/\sqrt{k},\,\mathbf{v}/\sqrt{k}$.
Therefore, the general term (\ref{eqn:general-term-expansion-1}) of the expansion is actually
a sum of terms of the form
\begin{eqnarray}
\label{eqn:general-term-expansion-2}
k^{\mathrm{r}-|\ell|/2}\,\varrho_\tau(m)\,
\int_{\mathbb{C}^\mathrm{d}}P_{\ell}(\mathbf{u},\mathbf{w},\mathbf{v})\,
e^{\psi_2(\mathbf{u},\mathbf{v})+\psi_2(A\mathbf{v},\mathbf{w})}\,d\mathbf{v},
\end{eqnarray}
where $r$ is an integer, $P_{\ell}(\mathbf{u},\mathbf{w},\mathbf{v})$ is a polyhomogenous polynonomial
in $(\mathbf{u},\mathbf{w},\mathbf{v})$, of polydegree $\ell=(l_\mathbf{u},l_\mathbf{w},l_\mathbf{v})$,
and $|\ell|=l_\mathbf{u}+l_\mathbf{w}+l_\mathbf{v}$; the
coefficients are smooth in $m$.

By the previous passages, involving the change of variable
$\mathbf{v}=\mathbf{s}-Q^{-1} A L(\mathbf{u},\mathbf{w})+\mathbf{u}$,
the integral in (\ref{eqn:general-term-expansion-2}) may be rewritten as a sum
of terms of the form
\begin{eqnarray}
\label{eqn:general-term-expansion-3}
e^{\Gamma(\mathbf{u},\mathbf{w})}
\,\int_{\mathbb{C}^\mathrm{d}}
e^{-i\,\mathbf{s}^t\,J_0\,A^{-1}\,L(\mathbf{u},\mathbf{w})}\,R_{\ell''}(\mathbf{u},\mathbf{w},\mathbf{s})
e^{-\frac 12\,\mathbf{s}^tQ\,\mathbf{s}}\,d\mathbf{s},
\end{eqnarray}
where again $R_{\ell'}$ is polyhomogenous, of polydegree $\ell'$ with $|\ell'|=|\ell|$.

In turn, (\ref{eqn:general-term-expansion-3}) splits as a sum of terms of the form
\begin{eqnarray}
\label{eqn:general-term-expansion-4}
\lefteqn{e^{\Gamma(\mathbf{u},\mathbf{w})}
\,\int_{\mathbb{C}^\mathrm{d}}
e^{-i\,\mathbf{s}^t\,J_0\,A^{-1}\,L(\mathbf{u},\mathbf{w})}\,\widehat{R}_{\ell'}(\mathbf{u},\mathbf{w},D_\mathbf{s})
\left(e^{-\frac 12\,\mathbf{s}^tQ\,\mathbf{s}}\right)\,d\mathbf{s}}\nonumber
\\
&=&c\,\widehat{R}_{\ell'}\big(\mathbf{u},\mathbf{w},F(\mathbf{u},\mathbf{w})\big)\,e^{\mathcal{S}_{\tau,m}(\mathbf{u},\mathbf{w})}
\end{eqnarray}
where now $\widehat{R}_{\ell''}$ is polyhomogenous of polydegree $\ell''=(l'_\mathbf{u},l'_\mathbf{w},2a+l'_\mathbf{s})$
for some integer $a$.

Summing up, the general summand (\ref{eqn:general-term-expansion-2}) splits as a linear combination of terms
of the form
$$
k^{b-|\ell'''|/2}\cdot \widetilde{R}_{\ell'''}(\mathbf{u},\mathbf{w})\,e^{\mathcal{S}_{\tau,m}(\mathbf{u},\mathbf{w})},
$$
where $b$ is an integer, and $\widetilde{R}_{\ell'''}(\mathbf{u},\mathbf{w})$
is polyhomogenous of polydegree $\ell'''=(l_\mathbf{u}''',l_\mathbf{w}''')$. The claim follows.

\section{Proof of Corollary \ref{cor:condition-on-symbol}}

If $U_{\tau,k}$ is unitary for $k\gg 0$, then $U_{\tau,k}\circ U_{\tau,k}^*=\Pi_k$
for $k$ large. In particular, for any $x\in X$ this implies
\begin{equation}
\label{eqn:diagonal-behaviour}
\left(U_{\tau,k}\circ U_{\tau,k}^*\right)(x,x)=\Pi_k(x,x)=\left(\frac k\pi\right)^\mathrm{d}+O\left(k^{\mathrm{d}-1}\right).
\end{equation}
Now we have
\begin{equation}
\label{eqn:composition-integral}
\left(U_{\tau,k}\circ U_{\tau,k}^*\right)(x,x)=\int_XU_{\tau,k}(x,y) U_{\tau,k}^*(y,x)\,d\mu_X(y),
\end{equation}
where $U_{\tau,k}^*(y,x)=\overline{U_{\tau,k}(x,y)}$.

By Theorem \ref{thm:rapid-decay} with $\varepsilon=1/9$,
only a shrinking $S^1$-invariant neighborhood of $x_\tau$, of radius say $O\left(k^{-7/18}\right)$,
contributes non-negligibly to the asymptotics. So introducing Heisenberg local coordinates centered at
$x_\tau$, and with $\gamma_k$ as in (\ref{eqn:composizione-integrata-partizionata-k}),
we can rewrite (\ref{eqn:composition-integral})
as follows:
\begin{eqnarray}
\label{eqn:integration-rescaled}
  \lefteqn{\left(U_{\tau,k}\circ U_{\tau,k}^*\right)(x,x)\sim \int _X U_{\tau,k}(x,y) U_{\tau,k}^*(y,x)\,\gamma_k(y)\,d\mu_X(y) }\\
   &=&k^{-\mathrm{d}}\int _X \left|U_{\tau,k}\left(x,x_\tau+\left(\theta,\frac{\mathbf{v}}{\sqrt{k}}\right) \right) \right|^2 \mathcal{V}\left(\theta,\frac{\mathbf{v}}{\sqrt{k}}\right)\,\gamma\left(k^{-1/9}\mathbf{v}\right)\,d\mathbf{v}\,d\theta . \nonumber\end{eqnarray}
   Using Theorem \ref{thm:concentration-rate}, and recalling that $\mathcal{V}\left(\theta,\mathbf{0}\right)=1/(2\pi)$,
   we get with $Q=Q_A$:
\begin{eqnarray}
\label{eqn:integration-rescaled-1}
\left(U_{\tau,k}\circ U_{\tau,k}^*\right)(x,x)&\sim &
\frac{k^{\mathrm{d}}}{\pi^{2\mathrm{d}}}\left|\varrho_\tau(m)\right|^2\,\frac{2^{2\mathrm{d}}}{\det(Q)}\,
\int_{\mathbb{C}^\mathrm{d}}e^{2\,\Re\big(\mathcal{S}(\mathbf{0},\mathbf{v})\big)}\,d\mathbf{v}\nonumber\\
&&+O\left(k^{\mathrm{d}-1}\right);
\end{eqnarray}
in passing, that the remainder is $O\left(k^{\mathrm{d}-1}\right)$ rather than $O\left(k^{\mathrm{d}-1/2}\right)$
follows directly from the parity Claim in Theorem \ref{thm:concentration-rate}.
In view of Definition \ref{defn:key-bilinear-form},
\begin{equation}
\label{eqn:real-part}
2\,\Re\big(\mathcal{S}(\mathbf{0},\mathbf{v})\big)=-2\,\mathbf{v}^t\,\mathcal{P}\,\mathbf{v}
=-2\,\mathbf{v}^t\,OQ^{-1}O^t\,\mathbf{v}.
\end{equation}
Setting $\mathbf{s}=O^t\,\mathbf{v}$, and then $\mathbf{r}=\mathbf{s}/2$,
the integral in (\ref{eqn:integration-rescaled-1}) is
\begin{eqnarray}
\label{eqn:integral-real-part}
\lefteqn{\int_{\mathbb{C}^\mathrm{d}}e^{-2\,\mathbf{v}^t\,OQ^{-1}O^t\,\mathbf{v}}\,d\mathbf{v}
=\int_{\mathbb{C}^\mathrm{d}}e^{-2\,\mathbf{s}^t\,Q^{-1}\,\mathbf{s}}\,d\mathbf{s}}\\
&=&2^{-2\mathrm{d}}\int_{\mathbb{C}^\mathrm{d}}e^{-\frac 12\,\mathbf{r}^t\,Q^{-1}\,\mathbf{r}}\,d\mathbf{r}=
2^{-2\mathrm{d}}\left(2 \pi \right)^\mathrm{d}\,\sqrt{\det(Q)}.
\nonumber
\end{eqnarray}
Inserting (\ref{eqn:integral-real-part}) in (\ref{eqn:integration-rescaled-1}), we get
\begin{equation}
\label{eqn:leading-order-comp}
\left(U_{\tau,k}\circ U_{\tau,k}^*\right)(x,x)=\left(\frac k\pi\right)^\mathrm{d}\,\left|\varrho_\tau(m)\right|^2\,
\frac{2^{\mathrm{d}}}{\sqrt{\det (Q)}}+O\left(k^{\mathrm{d}-1}\right).
\end{equation}
Comparing (\ref{eqn:leading-order-comp}) with (\ref{eqn:diagonal-behaviour}),
we conclude that $\left|\varrho_\tau(m)\right|=2^{-\mathrm{d}/2}\cdot\det (Q)^{1/4}$ if $U_{\tau,k}$ is unitary for $k\gg 0$.

\section{Proof of Corollary \ref{cor:esiste-unitario}.}

As a preliminary remark, we recall that for any integer $a\ge 0$
a Toeplitz operator $Q$ of degree $-a$ may be written microlocally in the
form
\begin{equation}
\label{eqn:toeplitz-a}
Q\left(y',y''\right)=:\int_0^{+\infty}e^{it\,\psi\left(y',y''\right)}\,q\left(t,y',y''\right)\,dt,
\end{equation}
where the amplitude $q$ is a semiclassical symbol admitting an asymptotic expansion of the form
\begin{equation}
\label{eqn:amplitude-q}
q\left(t,y',y''\right)\sim \sum_{j\ge a}t^{\mathrm{d}-j}\,q_j\left(y',y''\right).
\end{equation}

On the other hand, if $Q$ is $S^1$-invariant then by the discussion in \cite{g-star}
it also admits an asymptotic expansion of the form
\begin{equation}
\label{eqn:toeplitz-b}
Q\sim \sum_{j\ge a}T^{-j}\,\Pi\circ M_{f_j}\circ \Pi,
\end{equation}
where now $f_j\in \mathcal{C}^\infty(M)$ is implicitly pulled-back to $X$,
$M_{f_j}$ is multiplication by $f_j$, and $T$ is a parametrix (in the Toeplitz sense)
of the elliptic first order Toeplitz operator associated to the generator
of the structure circle action. The symbol of $Q$, in particular, is the
function $\sigma(Q):\Sigma\rightarrow \mathbb{C}$ given by
$\sigma(Q)\big(x,r\alpha_x)=r^{-a}\,f_a(m)$, where $m=:\pi(x)$.

When working in Heisenberg local coordinates centered at $x\in X$,
\begin{equation}
\label{eqn:relation-symbols}
q_a(x,x)=\frac{1}{\pi^\mathrm{d}}\,f_a(m).
\end{equation}

Now consider the intrinsically defined asymptotic expansion
\begin{equation}
\label{eqn:asympt-pi}
\Pi_k(x,x)\sim \left(\frac k\pi\right)^{\mathrm{d}}+\sum _{j\ge 1}k^{\mathrm{d}-j}\,a_j(m),
\end{equation}
for certain $a_j\in \mathcal{C}^\infty(M)$.
Let $U_\tau=U_\tau^{[1]}$ be as in (\ref{eqn:defn-U-tau}), for some zeroth order Toeplitz operator
$R_\tau=R_\tau^{[1]}$ with $\varrho_\tau(m)=2^{-\mathrm{d}/2}\,\sqrt{\nu (\tau,m)}$.
The proof of Corollary \ref{cor:condition-on-symbol} implies
\begin{equation}
\label{eqn:asympt-1st-approx}
\left(U_{\tau,k}^{[1]}\circ \big(U_{\tau,k}^{[1]}\big)^*\right)(x,x)\sim \left(\frac k\pi\right)^{\mathrm{d}}+\sum _{j\ge 1}k^{\mathrm{d}-j}\,a_j^{[1]}(m),
\end{equation}
for certain $a_j^{[1]}\in \mathcal{C}^\infty(M)$; that the expansion goes down by integer steps can be seen - for instance -
by using in (\ref{eqn:integration-rescaled})
the parity properties of the $a_j$'s asserted in Theorem \ref{thm:concentration-rate}.

Next let $U_\tau^{[2]}$ be again as in (\ref{eqn:defn-U-tau}), but with
$R_\tau^{[1]}$ replaced by $R_\tau^{[2]}=:R_\tau^{[1]}+\Sigma_\tau^{[1]}$, where
$$
\Sigma_\tau^{[1]}= T^{-1}\,\Pi\circ M_{f_1}\circ \Pi,$$
for a suitable $f_1\in \mathcal{C}^\infty(M\times \mathbb{R})$. Thus $\Sigma_\tau^{[1]}$
is a Toeplitz operator of degree $-1$, hence microlocally of the form
\begin{equation}
\label{eqn:S-remained-2nd-step}
\Sigma_\tau^{[1]}\left(y',y''\right)=:\int_0^{+\infty}e^{it\,\psi\left(y',y''\right)}\,\sigma_\tau^{[1]}\left(t,y',y''\right)\,dt,
\end{equation}
with
$$
\sigma_\tau^{[1]}\left(t,y',y''\right)\sim \sum_{j\ge 1}t^{\mathrm{d}-j}\,\sigma_{\tau j}^{[1]}\left(y',y''\right),
$$
and $\sigma_{\tau 1}^{[1]}(x,x)=f_1(m,\tau)/\pi^\mathrm{d}$.
Applying the stationary phase argument in the proof of Theorem \ref{thm:concentration-rate}, and arguing as for
(\ref{eqn:leading-order-comp}), we get
\begin{eqnarray}
\label{eqn:modified-expansion}
\lefteqn{\left(U_{\tau,k}^{[2]}\circ \big(U_{\tau,k}^{[2]}\big)^*\right)(x,x)}\\
&\sim& \left(U_{\tau,k}^{[1]}\circ \big(U_{\tau,k}^{[1]}\big)^*\right)(x,x)
+\frac{k^{\mathrm{d}-1}}{\pi^{\mathrm{d}}}\varrho_\tau(m)\cdot 2\,\Re\big(f_1(m,\tau)\big)\,
\frac{2^{\mathrm{d}}}{\sqrt{\det (Q)}}\nonumber\\
&&+O\left(k^{\mathrm{d}-2}\right).    \nonumber
\end{eqnarray}
It is then clear that $f_1:M\rightarrow \mathbb{R}$ may be chosen uniquely so that (\ref{eqn:modified-expansion})
agrees with (\ref{eqn:asympt-pi}) up to $O\left(k^{\mathrm{d}-2}\right)$.

Proceeding inductively, there are unique real $f_j\in \mathcal{C}^\infty(M\times \mathbb{R})$, such that
if $R_\tau^{[\infty]}\sim R_\tau+\sum_{j\ge 1}T^{-j}\,\Pi\circ M_{f_j}\circ \Pi$ and $U_\tau^{[\infty]}$
is as in (\ref{eqn:defn-U-tau}), with $R_\tau^{[\infty]}$ in place of $R_\tau$, then
\begin{equation}
\label{eqn:modified-expansion-infty}
\left(U_{\tau,k}^{[\infty]}\circ \big(U_{\tau,k}^{[\infty]}\big)^*\right)(x,x)\sim \Pi_k(x,x),
\end{equation}
hence $U_{\tau,k}^{(\infty)}\circ \big(U_{\tau,k}^{(\infty)}\big)^*=\Pi_k+O\left(k^{-\infty}\right)$.
Therefore, $\big(U_{\tau,k}^{(\infty)}\big)^*\circ U_{\tau,k}^{(\infty)}\ge 0$ and
$\big(U_{\tau,k}^{(\infty)}\big)^*\circ U_{\tau,k}^{(\infty)}= \left(\big(U_{\tau,k}^{(\infty)}\big)^*\circ U_{\tau,k}^{(\infty)}\right)^2
+O\left(k^{-\infty}\right)$; working in an orthonormal basis of eigenvectors, we conclude that
$\big(U_{\tau,k}^{(\infty)}\big)^*\circ U_{\tau,k}^{(\infty)}=\Pi_k+O\left(k^{-\infty}\right)$
as well.

\section{Proof of Proposition \ref{prop:1-parameter-group}}

The proof is an adaptation of the one for Theorem \ref{thm:concentration-rate},
so we'll be rather sketchy.
Working at a fixed $\tau_0$, let us define operators
$\widetilde{U}_{\tau,k}=:U_{\tau_0+\tau/\sqrt{k},k}$, so that
\begin{equation}
\label{eqn:rescaled-derivative}
\left.\frac{dU_{\tau,k}}{d\tau}\right|_{\tau_0}=
\sqrt{k}\cdot\left.\frac{d\widetilde{U}_{\tau,k}}{d\tau}\right|_{0}.
\end{equation}
Arguing as in the proof of Theorem \ref{thm:concentration-rate}, we have in place of (\ref{eqn:composizione-integrata-riscalata})
\begin{eqnarray}
\label{eqn:composizione-integrata-riscalata-bis}
\lefteqn{\widetilde{U}_{\tau,k}\left(x+\frac{\mathbf{u}}{\sqrt{k}},x_{\tau_0}+\frac{\mathbf{w}}{\sqrt{k}}\right)}\\
&\sim&\frac{k^{2-\mathrm{d}}}{2\pi}\,\int_{\mathbb{C}^\mathrm{d}}\left[
\int_{1/E}^{E}\int_{1/E}^{E}\int_{-\epsilon}^\epsilon\int_{-\epsilon}^\epsilon
e^{i\,k\widetilde{\Psi}_{2}}\,\widetilde{\mathcal{A}}_2\cdot
\mathcal{V}\left(\theta,\frac{\mathbf{v}}{\sqrt{k}}\right)\,dt\,du\,d\vartheta\,d\theta\right]\,d\mathbf{v},
\nonumber
\end{eqnarray}
where now
\begin{eqnarray}
\label{eqn:psi-riscalata}
\lefteqn{\widetilde{\Psi}_2=t\,\psi\left(x+\frac{\mathbf{u}}{\sqrt{k}},x+\left(\theta,\frac{\mathbf{v}}{\sqrt{k}}\right)\right)}\\
&&+u\,\psi\left(\phi^X_{-(\tau_0+\tau/\sqrt{k})}\left(x+\left(\vartheta+\theta,\frac{\mathbf{v}}{\sqrt{k}}\right)\right),
x_{\tau_0}+\frac{\mathbf{w}}{\sqrt{k}}\right)-\vartheta,\nonumber
\end{eqnarray}
and the amplitude $\widetilde{\mathcal{A}}_2$
is similarly redefined.
We may assume without loss that integration in $d\vartheta\,d\theta$ is compactly supported near the origin.

\begin{lem}
Choose $C_0>0$. There exist constants $C_1,C_2>0$ such that, uniformly in
$|\tau|<C_0$, the contribution to the asymptotics of (\ref{eqn:composizione-integrata-riscalata-bis})
of the locus where $|\theta|>C_1\,k^{-7/18}$, $|\vartheta|>C_2\,k^{-7/18}$ is $O\left(k^{-\infty}\right)$.
\end{lem}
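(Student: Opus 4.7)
The plan is to adapt the integration-by-parts estimates used in Lemmas \ref{lem:reduction-k} and \ref{lem:reduction-k-1}, and in the initial reductions of the proof of Theorem \ref{thm:concentration-rate}, to the slightly perturbed phase $\widetilde{\Psi}_2$ of (\ref{eqn:psi-riscalata}). The essential observation is that, compared to the phase $\Psi_2$ appearing in the proof of Theorem \ref{thm:concentration-rate}, the only difference is the replacement of $\phi^X_{-\tau_0}$ by $\phi^X_{-(\tau_0+\tau/\sqrt{k})}$. Uniformly in $|\tau|<C_0$, one has $x_{\tau_0+\tau/\sqrt{k}}=x_{\tau_0}+\big(O(1/\sqrt{k}),O(1/\sqrt{k})\big)$ in Heisenberg local coordinates centered at $x_{\tau_0}$, so this shift is a perturbation of size $O(1/\sqrt{k})$ in every geometric quantity.

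First I would control the locus where $|\theta|\ge C_1\,k^{-7/18}$ by integration by parts in $dt$. Because Heisenberg local coordinates are isometric at the origin, Corollary 1.3 of \cite{bs} gives
$$
\big|\partial_t\widetilde{\Psi}_2\big|
=\left|\psi\!\left(x+\tfrac{\mathbf{u}}{\sqrt{k}},\,x+\big(\theta,\tfrac{\mathbf{v}}{\sqrt{k}}\big)\right)\right|
\ge a\,|\theta|^2
\ge a\,C_1^2\,k^{-7/9},
$$
for some $a>0$, exactly as in Lemma \ref{lem:reduction-k} with $\varepsilon=1/9$. Iterating integration by parts in $dt$ then produces a factor $(k\cdot k^{-7/9})^{-1}=k^{-2/9}$ per step, giving an $O(k^{-\infty})$ contribution. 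Next, on the complementary region $|\theta|<C_1\,k^{-7/18}$, I would choose $C_2>2C_1$ and bound $|\partial_u\widetilde{\Psi}_2|$ from below on $|\vartheta|\ge C_2\,k^{-7/18}$. By Lemma 3.2 of \cite{pao-ltfII} applied to $\phi^X_{-(\tau_0+\tau/\sqrt{k})}$, and by the uniform bound on the displacement of the base point, the angular coordinate of $\phi^X_{-(\tau_0+\tau/\sqrt{k})}\big(x+(\vartheta+\theta,\mathbf{v}/\sqrt{k})\big)$ relative to $x_{\tau_0}$ is $\vartheta+\theta+O(1/\sqrt{k})+R_3(\mathbf{v}/\sqrt{k})$. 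Applying Corollary 1.3 of \cite{bs} a second time,
$$
\big|\partial_u\widetilde{\Psi}_2\big|
\ge a'\,\big(\vartheta+\theta+O(1/\sqrt{k})+R_3(\mathbf{v}/\sqrt{k})\big)^2
\ge a''\,k^{-7/9},
$$
since the triangle inequality together with $C_2>2C_1$ and $k^{-1/2}\ll k^{-7/18}$ yields $|\vartheta+\theta+O(1/\sqrt{k})|\ge (C_2/2)\,k^{-7/18}$ for $k\gg 0$. Iterated integration by parts in $du$ then yields $O(k^{-\infty})$.

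The main technical obstacle is not a new phenomenon but a bookkeeping issue: all constants and remainder estimates must be uniform in $|\tau|<C_0$ and in $(\mathbf{u},\mathbf{w})$ with $\|\mathbf{u}\|,\|\mathbf{w}\|=O(k^{1/9})$. The cutoff $\gamma_k$ already forces $\|\mathbf{v}\|=O(k^{1/9})$, so every $R_j(\mathbf{v}/\sqrt{k})$ remainder is at worst $O(k^{-5/18})$, which is indeed negligible against $k^{-7/18}$; compactness of the interval $|\tau|\le C_0$ ensures that the constants $a,a',a''$ and the implicit $O(1/\sqrt{k})$ perturbation from the flow shift can be chosen independently of $\tau$. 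Once this uniformity is in place, the two integration-by-parts steps above combine to yield the claimed $O(k^{-\infty})$ bound.
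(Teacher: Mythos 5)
Your proof takes essentially the same route as the paper: both bound $|\partial_t\widetilde{\Psi}_2|$ from below by $\gtrsim k^{-7/9}$ via Corollary 1.3 of \cite{bs} on the region $|\theta|\ge C_1\,k^{-7/18}$ and integrate by parts in $dt$, then on the complementary region choose $C_2$ large enough relative to $C_1$ and $C_0$, use the flow expansion from \cite{pao-ltfII} to bound $|\partial_u\widetilde{\Psi}_2|\gtrsim k^{-7/9}$, and integrate by parts in $du$. One small numerical slip: with $\|\mathbf{v}\|=O(k^{1/9})$ we have $\|\mathbf{v}/\sqrt{k}\|=O(k^{-7/18})$, so the relevant remainders are $R_2(\mathbf{v}/\sqrt{k})=O(k^{-7/9})$ and $R_3(\mathbf{v}/\sqrt{k})=O(k^{-7/6})$ rather than $O(k^{-5/18})$; these are indeed negligible against $k^{-7/18}$, so your conclusion still stands.
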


\begin{proof}
Let $C_1>0$ be arbitrary, and suppose $|\theta|>C_1\,k^{-7/18}$.
Then
\begin{equation}
\label{eqn:bound-on-distance}
\mathrm{dist}_X\left(x+\frac{\mathbf{u}}{\sqrt{k}},x+\left(\theta,\frac{\mathbf{v}}{\sqrt{k}}\right)\right)>
\frac{C_1}{2}\,\,k^{-7/18},
\end{equation}
and therefore
\begin{equation}
\label{eqn:bound-on-psi}
\big|\partial_t\widetilde{\Psi}_2\big|=
\left|\psi\left(x+\frac{\mathbf{u}}{\sqrt{k}},x+\left(\theta,\frac{\mathbf{v}}{\sqrt{k}}\right)\right)\right|>
D\,\,k^{-7/9},
\end{equation}
for some $D>0$. Integrating by parts in $dt$, we conclude that uniformly in $|\tau|<C_1\,k^{-7/18}$
the contribution of the locus where
and $|\theta|>C_1\,k^{-7/18}$
to the asymptotics of (\ref{eqn:composizione-integrata-riscalata-bis}) is $O\left(k^{-\infty}\right)$.

Now choose $C_2\gg \max\{C_0,C_1\}$. If $|\tau|<C_0\,k^{-7/18}$
and $|\theta|<C_1\,k^{-7/18}$, $|\vartheta| >C_2\,k^{-7/18}$ then
\begin{equation*}
\mathrm{dist}_X\left(\phi^X_{-(\tau_0+\tau/\sqrt{k})}\left(x+\left(\vartheta+\theta,\frac{\mathbf{v}}{\sqrt{k}}\right)\right),
x_{\tau_0}+\frac{\mathbf{w}}{\sqrt{k}}\right)>\frac{C_2}{2}\,k^{-7/18},
\end{equation*}
and so
$$
\big|\partial_u\widetilde{\Psi}_2\big|=
\left|\psi\left(\phi^X_{-(\tau_0+\tau/\sqrt{k})}\left(x+\left(\vartheta+\theta,\frac{\mathbf{v}}{\sqrt{k}}\right)\right),
x_{\tau_0}+\frac{\mathbf{w}}{\sqrt{k}}\right)\right|>
D'\,\,k^{-7/9}.
$$
We now argue as before, using integration by parts in $du$.
\end{proof}

We may thus introduce in (\ref{eqn:composizione-integrata-riscalata-bis}) a cut-off of the form
$\gamma \left(k^{7/18}\,\|(\theta,\vartheta\|)\right)$, where $\gamma \in \mathcal{C}^\infty_0(\mathbb{R})$
is $\ge 0$ and $\equiv 1$ near the origin, perhaps at the cost of losing a rapidly decreasing contribution
(in $\mathcal{C}^j$ norm).

With the rescaling $(\theta,\vartheta)\mapsto (\theta,\vartheta)/\sqrt{k}$, we may then rewrite
(\ref{eqn:composizione-integrata-riscalata-bis}) as follows:
\begin{eqnarray}
\label{eqn:composizione-integrata-riscalata-tris}
\lefteqn{\widetilde{U}_{\tau,k}\left(x+\frac{\mathbf{u}}{\sqrt{k}},x_{\tau_0}+\frac{\mathbf{w}}{\sqrt{k}}\right)}\\
&\sim&\frac{k^{1-\mathrm{d}}}{2\pi}\,\int_{\mathbb{C}^\mathrm{d}}\left[
\int_{1/E}^{E}\int_{1/E}^{E}\int_{-\infty}^{+\infty}\int_{-\infty}^{+\infty}
e^{i\,k\widehat{\Psi}_{2}}\,\widehat{\mathcal{A}}_2\cdot
\mathcal{V}\left(\frac{\theta}{\sqrt{k}},\frac{\mathbf{v}}{\sqrt{k}}\right)\,dt\,du\,d\vartheta\,d\theta\right]\,d\mathbf{v},
\nonumber
\end{eqnarray}
where $\widehat{\Psi}_{2}$ and $\widehat{\mathcal{A}}_2$ are just $\widetilde{\Psi}_{2}$ and $\widetilde{\mathcal{A}}_2$
with the previous rescaling inserted, respectively, and in addition a cut-off $\gamma \left(k^{-1/9}\,\|(\theta,\vartheta)\|\right)$
has been incorporated into $\widehat{\mathcal{A}}_2$. In particular, integration in $d\theta\,d\vartheta$ is over
a ball centered at the origin in $\mathbb{R}^2$, of expanding radius $O\left(k^{1/9}\right)$.

We have (see \S 3 of \cite{sz})
\begin{eqnarray}
\label{eqn:expansion-psi}
\lefteqn{t\,\psi\left(x+\frac{\mathbf{u}}{\sqrt{k}},x+\left(\frac{\theta}{\sqrt{k}},\frac{\mathbf{v}}{\sqrt{k}}\right)\right)}\\
&=&it\,\left[1-e^{-i\theta/\sqrt{k}}\right]-\frac{it}{k}\,\psi_2(\mathbf{u},\mathbf{v})
+t\,R_3\left(\frac{\mathbf{u}}{\sqrt{k}},\frac{\mathbf{v}}{\sqrt{k}}\right)\,e^{-i\theta/\sqrt{k}}\nonumber\\
&=&-\frac{t\,\theta}{\sqrt{k}}+\frac{i\,t}{2k}\,\theta^2-\frac{it}{k}\,\psi_2(\mathbf{u},\mathbf{v})
+t\,R_3\left(\frac{\theta}{\sqrt{k}},\frac{\mathbf{u}}{\sqrt{k}},\frac{\mathbf{v}}{\sqrt{k}}\right).\nonumber
\end{eqnarray}

Using Corollary 2.2 of \cite{pao-torus}, we get from (\ref{eqn:azione-in-coordinate}):
\begin{eqnarray}
\label{eqn:azione-in-coordinate-bis}
\lefteqn{\phi^X_{-(\tau_0+\tau/\sqrt{k})}
\left(x+\left(\frac{1}{\sqrt{k}}\,\left(\vartheta+\theta\right),\frac{\mathbf{v}}{\sqrt{k}}\right)\right)}\\
&=&\phi^X_{-\tau/\sqrt{k}}\left(x_{\tau_0}+\left(\frac{1}{\sqrt{k}}\,\left(\vartheta+\theta\right)
+R_3\left(\frac{\mathbf{v}}{\sqrt{k}}\right),\frac{A\mathbf{v}}{\sqrt{k}}+
R_2\left(\frac{\mathbf{v}}{\sqrt{k}}\right)\right)\right)\nonumber\\
&=&x_{\tau_0}+\big(\Theta_{\tau,k},\Upsilon_{\tau,k}\big),
\end{eqnarray}
where
$$
\Theta_{\tau,k}=:\frac{1}{\sqrt{k}}\,\big(\tau\,f(m)+\vartheta+\theta\big)
+\frac \tau k\,\omega_m\big(\upsilon_f(m),A\mathbf{v}\big)
+R_3\left(\frac {\tau}{\sqrt{k}},\frac{\mathbf{v}}{\sqrt{k}}\right),
$$
$$
\Upsilon_{\tau,k}=:\frac{1}{\sqrt{k}}\,\big(A\mathbf{v}-\tau\,\upsilon_f(m)\big)+R_2\left(\frac \tau {\sqrt{k}},\frac{\mathbf{v}}{\sqrt{k}}\right).
$$
Arguing as for (\ref{eqn:expansion-second-summand}), we now get
\begin{eqnarray}
\label{eqn:expansion-second-summand-bis}
\lefteqn{u\,\psi\left(\phi^X_{-(\tau_0+\tau/\sqrt{k})}
\left(x+\left(\frac{1}{\sqrt{k}}\,\left(\vartheta+\theta\right),\frac{\mathbf{v}}{\sqrt{k}}\right)\right),
x+\frac{\mathbf{w}}{\sqrt{k}}\right)}\\
&=&iu\,\left[1-e^{i\Theta_{\tau,k}}\right]-\frac{i u}{k}\,\psi_2(A\mathbf{v}-\tau\,\upsilon_f(m),\mathbf{w})
\nonumber\\
&&+
R_3\left(\frac{\mathbf{v}}{\sqrt{k}},\frac{\tau}{\sqrt{k}},
\frac{\mathbf{w}}{\sqrt{k}},\frac{\vartheta}{\sqrt{k}}\frac{\theta}{\sqrt{k}}\right)\nonumber\\
&=&\frac{u}{\sqrt{k}}\,\big(\tau\,f(m)+\vartheta+\theta\big)
+\frac uk\,\left[\tau\,\omega_m\big(\upsilon_f(m),A\mathbf{v}\big)+\frac i2\,
\big(\tau\,f(m)+\vartheta+\theta\big)^2\right]\nonumber\\
&&-\frac{i u}{k}\,\psi_2(A\mathbf{v}-\tau\,\upsilon_f(m),\mathbf{w})+
R_3\left(\frac{\mathbf{v}}{\sqrt{k}},
\frac{\mathbf{w}}{\sqrt{k}},\frac{\tau}{\sqrt{k}},\frac{\vartheta}{\sqrt{k}},\frac{\theta}{\sqrt{k}}\right).\nonumber
\end{eqnarray}
Inserting (\ref{eqn:expansion-second-summand-bis}) into (\ref{eqn:composizione-integrata-riscalata-tris}),  we obtain
\begin{eqnarray}
\label{eqn:composizione-integrata-riscalata-quater}
\lefteqn{\widetilde{U}_{\tau,k}\left(x+\frac{\mathbf{u}}{\sqrt{k}},x_{\tau_0}+\frac{\mathbf{w}}{\sqrt{k}}\right)}\\
&\sim&\frac{k^{1-\mathrm{d}}}{2\pi}\,\int_{\mathbb{C}^\mathrm{d}}\left[
\int_{1/E}^{E}\int_{1/E}^{E}\int_{-\infty}^{+\infty}\int_{-\infty}^{+\infty}
e^{i\,\sqrt{k}\,\Psi_\tau}\,\mathcal{A}_\tau\cdot
\mathcal{V}\left(\frac{\theta}{\sqrt{k}},\frac{\mathbf{v}}{\sqrt{k}}\right)\,dt\,du\,d\vartheta\,d\theta\right]\,d\mathbf{v},
\nonumber
\end{eqnarray}
where
$$
\Psi_\tau=:u\big(\tau\,f(m)+\vartheta+\theta\big)-t\,\theta-\vartheta,
$$
while
\begin{eqnarray}
\label{eqn:fase-fluita}
\mathcal{A}_\tau&=:&\exp \left(i\tau\,\omega_m\big(\upsilon_f(m),A\mathbf{v}\big)
-\frac t2\,\theta^2-\frac u2\,
\big(\tau\,f(m)+\vartheta+\theta\big)^2\right)\nonumber\\
&&\cdot \exp\Big(\psi_2(\mathbf{u},\mathbf{v})+\psi_2(A\mathbf{v}-\tau\,\upsilon_f(m),\mathbf{w})\Big)
\cdot \mathcal{A}',
\end{eqnarray}
where $\mathcal{A}'=:\mathcal{A}\cdot e^{ikR_3}$.

We may Taylor expand (\ref{eqn:fase-fluita})
in descending powers of $k^{1/2}$, and
regard the inner integral in (\ref{eqn:composizione-integrata-riscalata-quater}) as an oscillatory integral
in $\sqrt k$, with real phase $\Psi_\tau=\Psi_\tau(t,\theta,u,\vartheta)$ depending on the parameter $\tau$.
Integrating by parts in $dt\,du$, one sees that only a bounded neighborhood of the origin in
the $(\theta,\vartheta)$-plane contributes non-negligibly to the asymptotics (we are assuming
$|\tau|<c$ for some $c>0$); we may then introduce an appropriate cut-off
and assume without loss that integration is compactly
supported.

Now $\Psi_\tau$ has the unique
stationary point
$P_\tau=\big(1,0,1,-\tau\,f(m)\big)$, and
$\Psi_\tau(P_\tau)=\tau\,f(m)$.
Furthermore,the Hessian there is
$$
H(P_\tau)=:\left(
             \begin{array}{cccc}
               0 & -1 & 0 & 0 \\
               -1 & 0 & 1 & 0 \\
               0 & 1 & 0 & 1 \\
               0 & 0 & 1 & 0 \\
             \end{array}
           \right)
$$
for every $\tau$, so that
its signature is zero.
In particular, the stationary phase Lemma yields for the
inner integral in (\ref{eqn:composizione-integrata-riscalata-quater}) an asymptotic expansion in descending powers of
$k^{1/2}$. Integration in $d\mathbf{v}$, on the other hand, is over a ball of radius $O\left(k^{1/9}\right)$,
and the expansion may be integrated term by term.

Summing up, we get for (\ref{eqn:composizione-integrata-riscalata-quater})
an asymptotic expansion of the form
\begin{eqnarray}
\label{eqn:composizione-integrata-riscalata-cinques}
\lefteqn{\widetilde{U}_{\tau,k}\left(x+\frac{\mathbf{u}}{\sqrt{k}},x_{\tau_0}+\frac{\mathbf{w}}{\sqrt{k}}\right)}\\
&\sim&e^{i\sqrt{k}\,\tau\,f(m)}\left[\varrho_{\tau_0}(m)\,\left(\frac k\pi\right)^\mathrm{d}\,\frac{2^{\mathrm{d}}}{\nu(\tau_0,m)}\,
\cdot e^{\mathcal{S}_{\tau_0,m}(\mathbf{u},\mathbf{w})}+O\left(k^{\mathrm{d}-1/2}\right)\right],\nonumber
\end{eqnarray}
where the remainder is a function of $\tau$. The expansion may be differentiated in $\tau$, and the leading order term
of the derivative
at $\tau=0$ is
$$
i\sqrt{k}\,f(m)\,\varrho_\tau(m)\,\left(\frac k\pi\right)^\mathrm{d}\,\frac{2^{\mathrm{d}}}{\nu(\tau,m)}\,
\cdot e^{\mathcal{S}_{\tau,m}(\mathbf{u},\mathbf{w})}.
$$
The statement follows in view of Theorem \ref{thm:concentration-rate} and (\ref{eqn:rescaled-derivative}).

\end{document}